\documentclass[leqno,english]{article}

\usepackage[latin1]{inputenc}
\usepackage[french,english]{babel}
\usepackage{amsfonts}
\usepackage{amsmath}
\usepackage{amssymb}
\usepackage{eufrak}
\usepackage{graphicx}
\usepackage{mathrsfs}
\usepackage{pstricks,pst-node}
\usepackage{young}

\newcommand{\ds}{\displaystyle}

\newcommand{\R}{\mathbb{R}}

\newcommand{\Np}[2]{[\![#1,#2]\!]}
\newcommand{\Q}{\mathbb{Q}}
\newcommand{\Z}{\mathbb{Z}}

\newcommand{\C}{\mathbb{C}}

\newcommand{\A}{\mathbb{A}}

\newcommand{\Sc}{\mathbb{S}}
\newcommand{\D}{\mathbb{D}}
\newcommand{\val}{\mathfrak{V}}

\newcommand{\ida}{\mathfrak{a}}
\newcommand{\idA}{\mathfrak{A}}
\newcommand{\idc}{\mathfrak{c}}

\newcommand{\ido}{\mathfrak{o}}

\newcommand{\norm}{\mathcal{N}}
\newcommand{\fhh}{\mathcal{P}_{n}(k)}

\newcommand{\be} {\begin{equation}}
\newcommand{\ee} {\end{equation}}
\newcommand{\bea} {\begin{eqnarray}}
\newcommand{\eea} {\end{eqnarray}}
\newcommand{\Bea} {\begin{eqnarray*}}
\newcommand{\Eea} {\end{eqnarray*}}
\newcommand{\pa} {\partial}

\newcommand{\al} {\alpha}
\newcommand{\ba} {\beta}
\newcommand{\de} {\delta}
\newcommand{\ga} {\gamma}

\newcommand{\om} {\omega}
\newcommand{\De} {\Delta}
\newcommand{\la} {\lambda}
\newcommand{\si} {\sigma}

\newcommand{\La} {\Lambda}

\newcommand{\na} {\nabla}
\newcommand{\va} {\varphi}
\newcommand{\eps} {\varepsilon}

\renewcommand{\le}{\leqslant}
\renewcommand{\ge}{\geqslant}

\newcommand{\I}{\mbox{I}}
\newcommand{\Id}{\mbox{Id}}

\newcommand{\GL}{\mathrm{GL}}
\newcommand{\SL}{\mathrm{SL}}

\DeclareMathOperator{\tr} {Tr}
\newcommand{\transp}[1]{#1 '}

\usepackage{theorem}
\newtheorem{theo}{Theorem}
\newtheorem{prop}[theo]{Proposition}
\newtheorem{lemm}[theo]{Lemma}
\theorembodyfont{\upshape}
\newtheorem{exem}[theo]{Example}
\newtheorem{defi}[theo]{Definition}
\newtheorem{rema}[theo]{Remark}
\newtheorem{proof}[theo]{Proof}

\author{Bertrand Meyer}
\date{}

\title{Generalised Hermite constants, \\ Voronoi theory and heights on flag varieties}

\begin{document}

\maketitle

\begin{abstract}
This paper explores the study of the general Hermite constant associated to the general linear group and its irreducible representations, as defined by T. Watanabe in \cite{wa1}. To that end, a height, which naturally applies to flag varieties, is built and notions of perfection and eutaxy characterising extremality are introduced. Finally we acquaint some relations ({\it e.g.} with Korkine--Zolotareff reduction), upper bounds and computation relative to these constants.
\end{abstract}

\section*{Introduction}

The traditional Hermite constant can be defined by the following formula
\be \label{eq:0} \ga_n =\max_{A} \min_{ \substack{x \in \Z^n, \\ \, x \neq 0} } \frac{A[x]}{(\det(A))^{1/n} }   \ee
when $A$ runs through the set of all positive definite quadratic forms, or else, from the lattice standpoint, by the equivalent formula

\be \ga_n = \max_{\La} \frac{ \min{\La} }{ (\det(\La))^{1/n}} \ee
where $\La$ stands for a lattice of $\R^n$.

These constants appear in various areas ; in particular, they account for the highest density one can reach by regularly packing balls of equal radius. 

Diverse generalisation of these constants has been set forth, the most accomplished taking the following shape  \cite{wa1} :
\be \label{eq:1} \ga_{\pi} (\| \cdot \|_{\A_k}) = \max_{g \in G(\A_k)^1} \min_{\ga \in G(k)} \| \pi(g\ga) x_{\pi} \|_{\A_k}^{2/[k:\Q]} .\ee
In this formula, an algebraic number field $k$ is fixed, as well as a connected reductive algebraic group $G$. The notation $G(\A_k)^1$ stands for the unimodular part ({\it i.e.} the intersection of the kernels of the characters of the group $G(\A_k)$).
Besides, $\pi$ is an irreducible strongly rational representation, $x_{\pi}$ is a highest weight vector of the representation, $\A_k$ is the ring of the ad\`eles on $k$ and $\| \cdot \|_{\A_k}$ denotes a height on $\Sc_{\pi}(k^n)$, the vector space that carries the action of the representation $\pi$.

When $\pi$ is the natural representation of $\GL_n$ on $k^n$ (\emph{i.e.} when for any $ x \in k^n$ and any $g \in \GL_n(k)$, $\pi(g)x$ is simply $g(x)$), we recover the Hermite--Humbert constant \cite{ica} and in particular the traditional Hermite constant expounded above (equation (\ref{eq:0})) when in addition $k$ is the rationnal field. Likewise if $\pi$ is the representation on the exterior power $\bigwedge^d (k^n)$, we get the Rankin-Thunder constant \cite{thu2}, or simply the Rankin constant \cite{ran} if in addition $k$ is the field of rationnals.

\medskip

The constant $\ga_{\pi}^{\GL_n} (\| \cdot \|_{\A_k})$ admits also a geometrical interpretation. Indeed, let us define $Q_{\pi}$ the (parabolic) subgroup of $G$ which stabilises the line spanned by the highest weight vector $x_{\pi}$. The map 
\be \label{e:pldrap} g \mapsto \pi(g^{-1}) x_{\pi} \ee
provides an embedding of the flag variety $ Q_{\pi} \backslash \GL_n$ into the projective space $\mathbb{P} \left( \Sc_{\pi}(k^n) \right)$. For $A \in \GL_n(\A_k)$, and $\mathscr{D}$ a flag represented by $x \in \Sc_{\pi}(k^n)$, one can define  the twisted height $H_A$ by $H_A( \mathscr{D} ) = \| Ax\|_{\A_k}$. Let us denote by $m$ the sum of the dimensions of the nested spaces of the flag $\mathscr{D}$. Then the generalised Hermite constant can be read into as the smallest constant $C$ such that for any $A \in \GL_n (\A_k)$, there exists a flag $\mathscr{D}$ satisfying 
$$ H_A(\mathscr{D}) \le C^{1/2} |\det (A) |_{\A_k}^{m/n}, $$
which joins up with the definition by J. L. Thunder in \cite{thu2} as far as subspaces of $k^n$ of fixed dimension are concerned.

In the case of the traditional Hermite constant, G. Voronoï stated two properties, \emph{perfection} and \emph{eutaxy}, which enable to characterise extreme quadratic forms, or in other words, forms that constitute a local maximum of the quotient $\min A / \det(A)^{1/n} $. Generalisations of the notions of eutaxy and perfection have been put forward to fit in the framework of the Rankin \cite{cou3} or Hermite-Humbert constants \cite{cou1}. The point of this paper is to define  appropriate notions in the case of the constant $\ga_{\pi}(\| \cdot \|_{\A_k})$ associated to any irreducible polynomial representation $\pi$ of the group $\GL_n $.

Our text is organised as follows. In a first part, we fix the conventions we shall stick to in the sequel ; we shall recall what is to be known about irreducible representations of $GL_n$ ; we shall also give a detailed construction of the height that is let invariant by the action of the compact subgroup $K_n (\A_k) = \displaystyle \prod_{v \in \val_{\infty}} O_n(k_n) \, \times \, \prod_{v \in \val_f} \GL_n(\ido_v)$ (Think of this subgroup as an adelic analog of the orthogonal group in the real case). In a second part, we shall commit ourselves to exhibit a link between the adelic definition of $\ga_{\pi}(\| \cdot \|_{\A_k})$ with an {\it ad hoc} definition built on Hermite--Humbert forms. This second definition has the advantage of relying only on finitely many places of $k$ : the archimedian places. This allows us, in a third place, to define adequate notions of \emph{perfection} and \emph{eutaxy} for Hermite--Humbert forms and to demonstrate a theorem \`a la Voronoï. Eventually we bring forth some easy relations, upper bounds and computations relative to the Hermite constants.

\section{Representations and heights}
\subsection{Conventions}

In the sequel, an integer $n$ is fixed and the algebraic group we shall consider will always be the general linear group $G = \GL_n$.

\subsubsection{Global field} The letter $k$ refers to a \emph{number field}, that is an algebraic extension of $\Q$, of degree $d=r_1+2r_2$, where $r_1$ counts its real embeddings and $r_2$ counts its pairs of complex embeddings. Sometimes, $r$ may designate $r_1+r_2$. The embedings of $k$ into $\R$ or $\C$ are denoted by $(\si_j)_{1 \le j \le r}$, the $r_1$ first embeddings being real, the $r_2$ last embeddings being complex. The ring of integers of $k$ shall be written $\ido_k$ or simply $\ido$. The field $k$ encompasses $h$ ideal classes, the representative $\ida_1 = \ido$, $\ida_2$, \dots, $\ida_h$ of which we fix once for all. The norm of an ideal shall be denoted by $\norm(\ida)$.

\subsubsection{Local fields} The set of the places of $k$ is denoted by $\val$ and divides up into two parts, the set of archimedian or infinite places, denoted by $\val_{\infty}$ and the set of ultrametric or finite places, denoted by $\val_f$. The completion of $k$ (of $\ido$ respectivelly) at the place $v$ (where $v \in \val$) is denoted by $k_v$ ($\ido_v$ respectivelly). We shall call $d_v$ the local degree $[k_v : \Q_v]$. The completion $k_v$ is equiped with two absolute values : the absolute value  $\| \cdot \|_v$ which is the unique extension of either the absolute value of the real field $\Q_{\infty} $ when $v$ is an archimedean place or the one of the $p$-adic field $\Q_p$ when $v$ divides $p$ (that is $\|p \|_v = p^{-1} $), and the normalised absolute value $| \cdot |_v = \| \cdot \|^{d_v}$, which offers the benefit of satisfying the product formula, {\it i.e.} the equality $\prod_{v \in \val} |\al|_v = 1$ holds for any  $\al \in k^{\times}$.

\subsubsection{Partition and related items} The letter $\la$ shall always refer to a \emph{partition} of any integer $m$, which we shall note down by $\la \vdash m$. Within the borders of this article, we suppose additionnaly that a partition has always less then $n$ parts. Any partition can be depicted by a bar diagram (called Ferrer diagram) drawn in the first quadrant of the plane. The boxes which make up the diagram are indexed by their ``cartesian coordinates'', the most South--West box being the box $(1,1)$. The symbol $*$ pertains to the transpose partition $\la^*$, the diagramm  of which is by definition the symmetric with respect to the first bissector line of the diagramm of $\la$. The letters $s$ and $t$ refer to the width and the height of the Ferrer diagramm.
\begin{exem} Let $\la = (4,1)$ be the partition $5=4+1$, its diagramm is  
$$\la = \begin{young} 
  \cr
 & & & \cr \end{young}  $$
and the conjugate partition is $\la^* = (2,1,1,1)$. Here $s=4$ and $t=2$.
\end{exem}
 
To such a partition $\la$ is associated a character $\chi_{\la}$ defined on the torus $(k^{\times})^n$ by $\chi_{\la} : (x_1 , \dots, x_n) \in (k^{\times})^n \mapsto (x_1^{\la_1} x_2^{\la_2}  \dots  x_n^{\la_n}) \in k^{\times} $.

When $M$ is a real (respectivelly complex) vector or square matrix,  $\transp{M}$ is the transpose (respectivelly transconjugate) vector or matrix.

\subsubsection{Hermite--Humbert forms} We also recall the the space of Hermite--Humbert forms $\fhh$ is by definition the space 
$$\fhh = (\mathscr{S}_n^{>0})^{r_1} \times (\mathscr{H}_n^{>0})^{r_2}$$
where $\mathscr{S}_n^{>0}$ designates the set of determinant 1 symmetric positive definite matrices and $\mathscr{H}_n^{>0}$ the set of determinant 1 Hermitian positive definite matrices. (Depending on the authors, the condition concerning the determinant is not always retained but gives rise here to a convenient normalisation.) In the sequel, we may confuse without more precision a quadratic form on $k^n$ and the matrix which represents it in the canonical basis. Furthermore, the letter $\mathcal{I}$ means the $r$-tuple of identity matrices : $\mathcal{I}= (\I_n)_{1 \le j \le r} \in \fhh$.

\subsection{Irreducible representations of the general linear group}

To a partition $\la$ and a character $\chi_\la$ on the torus are classically associated a vector space  $\Sc_{\la}(k^n)$  and a representation $\pi_\la$ of the group $\GL_n(k)$, {\it i.e.} an action of the group $\GL_n(k)$ on the space  $\Sc_{\la}(k^n)$. The space $\Sc^{\la}(k)$ is sometimes called \emph{Weyl module} or \emph{Schur module}. We recall two equivalent constructions of it. For further details, one can consult \cite{fu} from which are excerpted  the following two very explicit contructions.

\subsubsection{Description of the Schur module by tableaux of vectors}

The cartesian product $E^{\times m}$ of a set $E$ is generally denoted in line by $E\times E \times \dots \times E$. In the sequel, we shall index each component of the product by one of the boxes of a partition $\la \vdash m$ and we shall write $E^{\times \la}$ to strengthen visually this convention. The first definition of  $\Sc^{\la}(k^n)$ leans upon the universal property described below. This definition will be held to represent elements of $\Sc^{\la}(k^n)$. 
  
\begin{defi}  Let $A$ be a commutative ring. For any $A$-module $E$, the Schur module $\Sc^{\la}(E)$ is the $A$-module equiped with a projection morphism $\rho_{\la} : E^{\times \la} \to \Sc^{\la}$ such that for any map $\va$ : $E^{\times \la} \to F$ from $E^{\times \la}$ to a $A$-module $F$, enjoying the following properties :
\begin{enumerate}
\item $\va$ is multilinear,
\item the restriction of $\va$ to any column of $\la$ is alternate,
\item for any pair of columns, for any choice of $p$ positions in the rightmost column, for any $v \in E^{\times \la}$, 
$$\va(v) = \sum_{w} \va(w)$$ 
where the sum concerns all the $w \in E^{\times \la}$ obtained from $v$ by flipping $p$ coefficients fixed in advance in the rightmost column with any $p$ coefficient in the leftmost column in an order preserving way within each column.
\end{enumerate}
there exists a unique homomorphism of $A$-modules $\tilde{\va}$ such that for any $m$-tuple of vectors  $v \in E^{\times \la}$, $\va(v) = \tilde{\va}(\rho_{\la}(v))$.\\
This construction can be summed up by the following commutative diagram


\centerline{\includegraphics{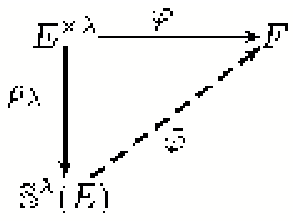}}
\end{defi}

The Schur module $\Sc^{\la}(k^n)$ can be realised as the quotient of $E^{\times \la}$ by some relations. We refer to \cite{fu} for more details and examples about this definition.

\begin{rema} When the partition $\la \vdash m$ admits only one part (horizontal diagram), the Schur module $\Sc^{\la}(E)$ is the space of the $m$-th symmetric powers $\text{Sym}^m(E)$ ; whereas when $\la \vdash m$ admits $m$ parts all equal to 1 (vertical diagram), the Schur module $\Sc^{\la}(E)$ is the space of the $m$-th exterior products $\bigwedge^m(E)$.
\end{rema}

One can see elements of the Schur module $\Sc^{\la}(E)$ as linear combination of diagrams of shape $\la$ inscribed with vectors from $E$. Yet, for a given element of $\Sc^{\la}(E)$, this script is not necessarily unique. For example, as well as $x \land y$ and $x \land (x+y)$ represent the same projection of the pair $(x,y) \in E^2$ in $\bigwedge^2(E)$, the notation in the shape of an inscribed diagram 
$$\begin{young}
t \cr 
x & y & z \cr
\end{young}  $$
stands for the projection of the quadruplet $(t,x,y,z) \in E^4$ in the module $\Sc^{\hoogte=2pt 
 \breedte=3pt
 \begin{young}
 \cr
 & &  \cr
\end{young} 
 \hoogte=15pt  
 \breedte=18pt}(E)$.

\begin{prop} When $E$ is a \emph{free} $A$-module, the Schur module $\Sc^{\la}(E)$  is also a\emph{free} $A$-module and the theory even affords us with a basis. Namely, this basis is made out the vectors 
\newdimen\breedte   \breedte=32pt 
$$e_T = \begin{young}
e_{T_{1,t}} \cr
e_{T_{1,t-1}} & \cr
& & \cr
e_{T_{1,1}} & e_{T_{2,1}} &  & e_{T_{s,1}} \cr 
\end{young} $$
\newdimen\breedte   \breedte=18pt 
where $(e_i)_{1 \le i \le n}$ is a basis of $E$ and $T$ a \emph{Young tableau}, {\it i.e.} a diagram of shape $\la$, inscribed with numbers chosen in $\Np{1}{n}$, the inscription of which are strictly increasing along the column and non-decreasing along the rows.
\end{prop}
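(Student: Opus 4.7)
The plan is to prove spanning and linear independence of the family $(e_T)$ indexed by semistandard tableaux separately, following the classical straightening approach of \cite{fu}.

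\textbf{Spanning.} Since $\rho_\la$ is surjective and $E$ is free on $(e_i)_{1\le i\le n}$, any element of $\Sc^\la(E)$ is an $A$-linear combination of diagrams $e_F$ for arbitrary fillings $F$ of $\la$ by indices in $\Np{1}{n}$. I would straighten each such $e_F$: relation (2) allows one to reorder each column into strictly increasing form (with a sign) or to conclude $e_F=0$ whenever a column has a repeated entry, so I may assume all columns of $F$ are strictly increasing. If a row descent $F_{i,j+1}<F_{i,j}$ remains, I pick it with $i$ and $j$ minimal and apply relation (3) between columns $j$ and $j+1$ with $p=i$, fixing the top $i$ entries of column $j+1$: this rewrites $e_F$ as an $A$-linear combination of $e_w$'s where each $w$ is strictly smaller than $F$ in a well-founded combinatorial order, for instance the reverse lexicographic order on the column word of $F$. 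Iteration terminates at an expansion of $e_F$ on the $e_T$ with $T$ semistandard.

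\textbf{Independence.} For each semistandard $T_0$ of shape $\la$ I would construct a linear form $\tilde{\va}_{T_0}$ on $\Sc^\la(E)$ via the universal property. Starting from a multilinear map $\va_{T_0}:E^{\times\la}\to A$ defined column by column as a product of minors of the coordinate matrix of the inscribed vectors in the basis $(e_i)$, with row indices prescribed by the entries of $T_0$, one checks that $\va_{T_0}$ satisfies relations (1)--(3). Column alternation is automatic from the antisymmetry of minors; the shuffle identity (3) becomes a Pl\"ucker-type relation between minors of a rectangular matrix, verified by a direct expansion. The resulting $\tilde{\va}_{T_0}$ satisfies $\tilde{\va}_{T_0}(e_{T_0})=\pm 1$ and $\tilde{\va}_{T_0}(e_T)=0$ whenever $T$ is strictly smaller than $T_0$ in the ordering used above, so the matrix $\bigl(\tilde{\va}_{T_0}(e_T)\bigr)_{T_0,T}$ is unitriangular; this forces every coefficient in a relation $\sum a_T e_T=0$ to vanish, and a fortiori shows that the $e_T$ are free over $A$.

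The hard step is the unitriangularity of the dual forms. Spanning is an essentially mechanical combinatorial reduction once the straightening order is set up; but verifying that the column-minor functionals separate semistandard tableaux in a triangular fashion, uniformly over an arbitrary commutative ring $A$ rather than merely a field of characteristic zero, is the core combinatorial content imported from the relevant chapter of \cite{fu}. In a streamlined write-up I would invoke this reference for the unitriangularity rather than reprove it from scratch.
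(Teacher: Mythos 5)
Your spanning argument is the standard straightening procedure and is fine in outline (one must be a little careful that the chosen exchange step strictly decreases a well-founded order on fillings, but this is the classical bookkeeping). The independence half, however, has a genuine gap: the functionals $\va_{T_0}$ you define do not satisfy relation (3), so the universal property cannot be invoked and $\tilde{\va}_{T_0}$ does not exist on $\Sc^{\la}(E)$. Concretely, take $\la=(2)$ (two columns of length one), so relation (3) with $p=1$ forces any admissible $\va$ to be symmetric, $\va(x,y)=\va(y,x)$; but your prescription for $T_0$ with entries $1,2$ gives $\va_{T_0}(x,y)=x_1y_2$, and with $x=e_1$, $y=e_2$ this is $1$ while $\va_{T_0}(y,x)=0$. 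The failure is structural: the Pl\"ucker/Sylvester exchange identity holds for products of minors only when the two column-minors are taken on a \emph{common nested set of initial rows} (as in the polynomials $\phi_T$ built from the single generic matrix $Z$), not when each column is assigned its own row set read off from $T_0$. Indeed, had your functionals been well defined, the pairing $\tilde{\va}_{T_0}(e_T)$ would be $\delta_{T,T_0}$ on semistandard tableaux (columns determine the tableau), i.e.\ perfectly diagonal, which is incompatible with relations that genuinely mix distinct tableaux.

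The repair is the route the paper itself relies on (it gives no proof and quotes \cite{fu}): use the polynomial realisation. The assignment $e_T\mapsto \phi_T$ extends to a map $E^{\times\la}\to A[Z]$ sending a filling by vectors to the product over columns of the determinants $\det\bigl(\sum_i (v_{j,b})_i\, z_{a,i}\bigr)_{a,b}$; because all these minors use the initial rows of the same matrix $Z$, the genuine Sylvester exchange identity applies and the map factors through $\Sc^{\la}(E)$. Linear independence of the $e_T$ then reduces to independence of the polynomials $\phi_T$, $T$ semistandard, in $A[Z]$, which is proved by a triangularity argument on monomials: each $\phi_T$ contains its ``diagonal'' monomial $\prod_{j,a} z_{a,T(j,a)}$ with coefficient $1$, this monomial determines $T$, and with respect to a suitable order it occurs in no earlier $\phi_{T'}$. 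Since the pivotal coefficients are $\pm1$, the argument is valid over an arbitrary commutative ring $A$, which is exactly the uniformity you were worried about; coefficient extraction in $A[Z]$ then supplies the separating linear functionals that your direct construction was meant to provide.
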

\begin{exem} Let us set $E = \R^3$ and take $B = (\vec i , \vec j, \vec k)$ a basis of $\R^3$, then a basis of the Schur module $\Sc^{\hoogte=2pt 
 \breedte=3pt
 \begin{young}
 \cr
 &   \cr
\end{young} 
 \hoogte=15pt  
 \breedte=18pt}(\R^3)$ is given by
$$ \hoogte=15pt  
 \breedte=18pt \left\{ \, \begin{young} \vec j \cr \vec i & \vec i \cr \end{young}\, , 
\begin{young} \vec k \cr \vec i & \vec i \cr \end{young}\, ,
\begin{young} \vec j \cr \vec i & \vec j \cr \end{young}\, ,
\begin{young} \vec k \cr \vec i & \vec j \cr \end{young}\, ,
\begin{young} \vec j \cr \vec i & \vec k \cr \end{young}\, ,
\begin{young} \vec k \cr \vec i & \vec k \cr \end{young}\, ,
\begin{young} \vec k \cr \vec j & \vec j \cr \end{young}\, ,
\begin{young} \vec k \cr \vec j & \vec k \cr \end{young} \right\}$$
\end{exem}
\begin{rema} In the case of exterior products or symmetric powers, this results is nothing more then the well--known following fact : the vectors $e_{i_1} \land e_{i_2} \land \dots \land e_{i_m}$ constitute a basis of $\Sc^{\hoogte=2pt 
 \breedte=3pt
 \begin{young}
 \cr
 \cr 
 \cr
\end{young} 
 \hoogte=15pt  
 \breedte=18pt}(E) = \bigwedge^m(E)$ for $i_1 < \dots < i_m$ and the vectors $e_{i_1} \odot e_{i_2} \odot \dots \odot e_{i_m}$ constitute a basis of $\Sc^{\hoogte=2pt 
 \breedte=3pt
 \begin{young}
 &  &   \cr
\end{young} 
 \hoogte=15pt  
 \breedte=18pt}(E) = \text{Sym}^m(E)$ for $i_1 \le \dots \le i_m$. 
\end{rema}

\subsubsection{Description of the Schur module by spaces of polynomials}

In the sequel, we shall also need a second description of the Schur module, adapted to define a scalar product. Indeed, the Schur module $\Sc^{\la}(E)$ can be realised as a certain subspace of polynomials in many variables. Let $Z$ be the matrix of indeterminates
$$Z = \begin{pmatrix} z_{1,1}  & \dots & z_{1,n} \\
\vdots & & \vdots \\
z_{t, 1} & \dots & z_{t, n} 
\end{pmatrix} $$
and $k[Z]$ the space of polynomials in the $nt$ variables of $Z$. Beware that elements of $k[Z]$ are not polynomials of the matrix $Z$.

For integers $i_1$, \dots, $i_r$, let $D_{(i_1, \dots, i_r  )}$ be the following determinant
$$ D_{(i_1, \dots, i_r  )} =  \begin{vmatrix} z_{1,i_1} & \dots & z_{1,i_r} \\
z_{2,i_1} & \dots & z_{2,i_r} \\ 
\vdots & & \vdots \\
z_{r,i_1} & \dots & z_{r,i_r} 
\end{vmatrix} $$
When $T$ is a tableau inscribed with integers taken between $1$ and $n$, we call $\phi_T$ the polynomial 
$$\phi_T = \underbrace{ D_{(T(1,1) , \dots , T(1,\la_1^*)} }_{\text{relative to the 1st column}} 
\underbrace{ D_{(T(2,1) , \dots , T(2,\la_2^*)} }_{\text{relative to the 2nd column}} 
\dots
\underbrace{ D_{(T(s,1) , \dots , T(s,\la_s^*)} }_{\text{relative to the last column}} .
$$

The linear map $E^{\times \la} \to k[Z]$ defined by $e_T \mapsto \phi_T$ factors out through the universal property of the Schur module into an injective map $\Sc_{\la}(E) \to k[Z]$. This shows that $\Sc_{\la}(E)$ is isomorphic to the subspace $\D^{\la}(E)$ of $k[Z]$ spanned by the set of polynomials $\phi_T$ where $T$ runs through the set of all Young tableaux.

\begin{prop} The image $\D^{\la}(E)$ in $k[Z]$ of $\phi$ is isomorphic to the Schur module $\Sc_{\la}(E)$.
\end{prop}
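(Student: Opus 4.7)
The plan is to construct the isomorphism by invoking the universal property of $\Sc^{\la}(E)$: I will show that the assignment $v \mapsto \phi_v$ on the free module $E^{\times \la}$ satisfies the three conditions of the Schur-module definition, deduce that it descends to a well-defined linear map $\tilde{\phi} : \Sc^{\la}(E) \to k[Z]$ with image exactly $\D^{\la}(E)$, and finally argue that this map is injective.

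\medskip

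First, I would define $\phi_v$ for a general vector-filled tableau $v = (v_c)_{c \in \la}$ by multilinear extension of the basis recipe $e_T \mapsto \phi_T$: writing each $v_c$ in coordinates against the corresponding row of $Z$ via $v \mapsto \sum_k (v)_k z_{i,k}$, the $j$-th column of $v$ produces an $\la_j^* \times \la_j^*$ determinant, and $\phi_v$ is the product over columns. Checking the three axioms is then routine:
\begin{enumerate}
\item \emph{Multilinearity} in each box is inherited from the multilinearity of each column-determinant in its rows.
\item The \emph{column-alternating} property is immediate, since swapping two vectors within a single column swaps two rows of the corresponding determinant.
\item The \emph{exchange relation} is the classical Sylvester/Pl\"ucker identity: for a pair of adjacent columns and $p$ positions fixed in the right column, the sum over all swaps of $p$ entries with the left column (with the appropriate ordering) vanishes up to sign on products of minors. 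This is a standard determinantal identity (see \cite{fu}) and is exactly the Pl\"ucker quadratic relation generalised to several columns.
\end{enumerate}
The universal property of $\Sc^{\la}(E)$ then yields a unique linear map $\tilde{\phi} : \Sc^{\la}(E) \to k[Z]$ with $\tilde{\phi}(\rho_{\la}(v)) = \phi_v$. Its image is spanned by the $\phi_T$ as $T$ ranges over all tableaux, so by definition the image equals $\D^{\la}(E)$, and $\tilde{\phi}$ is surjective onto $\D^{\la}(E)$.

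\medskip

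The substantive point is injectivity. By the basis proposition above, $\Sc^{\la}(E)$ is free with basis $\{e_T : T \text{ semistandard Young tableau}\}$, so it suffices to show that the corresponding polynomials $\{\phi_T : T \text{ semistandard}\}$ are linearly independent in $k[Z]$. The standard way to see this is to fix the lexicographic order on the monomials in the $z_{i,j}$ with $z_{1,1} > z_{1,2} > \cdots > z_{1,n} > z_{2,1} > \cdots$ and to observe that in the determinant $D_{(T(j,1), \ldots, T(j,\la_j^*))}$ the leading monomial is the diagonal term $z_{1,T(j,1)} z_{2,T(j,2)} \cdots z_{\la_j^*, T(j,\la_j^*)}$, whose exponent vector records column $j$ of $T$. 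Multiplying over columns, the leading monomial of $\phi_T$ encodes $T$ entry by entry, so distinct semistandard tableaux produce distinct leading monomials; linear independence follows.

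\medskip

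The main obstacle is step~(3), the exchange relation: one has to verify that the Pl\"ucker-type quadratic identity for two adjacent column-minors indeed yields precisely the alternating sum prescribed by the Schur-module definition, after accounting for the order-preserving convention within each column. Once this identity is in hand, the rest of the argument---factoring through the universal property and checking injectivity via leading monomials---proceeds without serious difficulty.
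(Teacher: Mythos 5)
Your proposal is correct and takes essentially the paper's route: the paper itself only asserts that $e_T \mapsto \phi_T$ factors through the universal property into an injective map $\Sc_{\la}(E) \to k[Z]$, deferring the details to \cite{fu}, and your verification of the three axioms (with the exchange relation reduced to the Sylvester/Pl\"ucker identity) together with the leading-monomial proof that the $\phi_T$ for semistandard $T$ are linearly independent supplies exactly those details. The only nuance is that the exchange condition in the definition is imposed for an \emph{arbitrary} pair of columns, not just adjacent ones, but the determinantal identity you invoke holds in that generality, so your restriction to adjacent columns is harmless.
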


We identify these two spaces by an isomorphism that we call again $\phi$.

\subsection{The representations of the general linear group} 

By functoriality of the construction of the Schur module, the standard action of 
$\GL_n(k)$ on $k^n$ determines an action $\pi_{\la}$ of the general linear group $\GL_n(k)$ on the Schur module $\Sc_{\la}(k^n)$. 

\begin{defi} We call $\pi_{\la}$ the action of the general linear group on the Schur module described on decomposed vectors by 
\be  \hoogte=15pt  
 \breedte=18pt \forall g \in \GL_n(k), \; \forall X = \begin{young}
x_2 \cr 
x_1 & y_1 & \dots \cr 
\end{young} \, , \qquad \pi_{\la}(g).X =
\newdimen\breedte   \breedte=30pt  
 \begin{young} 
g(x_2) \cr 
g(x_1) & g(y_1) & \dots \cr \end{young}  .\ee
\newdimen\breedte   \breedte=18pt 
\end{defi}

The vector $e_{U(\la)}$ of the Schur module $ \Sc^{\la}(k^n)$ where $U(\la)$ stands for the Young tableau
\be  \hoogte=20pt  
 \breedte=18pt U(\la) = \begin{young}
\la_1^* \cr
\, \cdots & \la_2^* \cr
2 & \, \cdots & \, \cdots & 2 \cr
1 & 1 & 1 & 1 & 1 \cr
\end{young} \label{e:12} \ee
is invariant under the action of the subgroup of unipotent upper triangular matrices (which comprise only ones on the diagonal). This vector is said to direct the line of the \emph{highest weight vectors} of $\pi_{\la}$. It is invariant under the action of the parabolic subgroup $P_{\la}$ of $G$ which stabilises the flag $\mathscr{D}_0 = \{ \text{span} (e_1, \dots, e_{\la_\ell^*} ) \}_{1 \le \ell \le s}$.

Under the action of a diagonal matrix $h$, the vector $e_{U(\la)}$ is simply multiplied by $\chi_{\la}(h)$. Thus, the character of the torus $\chi_{\la}$ is called the \emph{weight} of $\pi_{\la}$.

Let us notice eventually that the isomorphism $\phi$ turns the action of the general linear group $\GL_n(k)$ on the Schur module $\Sc_{\la}(k^n)$ into an action of right multiplication on the subspace $\D^{\la}(k^n)$  of the polynomials $ k[Z]$. In other words, if $X $ is a vector of $\Sc^{\la}(k^n)$, if the polynomial $P(Z)$ is its image by $\phi$ in $\D^{\la}$, and if $g$ belongs to the group $ \GL_n(k)$, then
$$ \phi ( \pi_{\la}(g).X ) = P(Z.g)  $$

\begin{prop} Up to isomorphisms, there are no other irreducible polynomial representations of $\GL_n(k)$ than the representations $(\pi_{\la}, \Sc^{\la}(k^n))$ described above.
\end{prop}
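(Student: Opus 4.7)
The plan is to invoke the classical highest-weight classification for the reductive group $\GL_n$ over the characteristic-zero field $k$, after possibly extending scalars to an algebraic closure ($k$-rationality of the resulting representations is automatic since the Schur modules are defined over $\Z$).

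For any finite-dimensional polynomial representation $(\pi, V)$ of $\GL_n$, one first decomposes $V$ into weight spaces under the diagonal torus $T$; polynomiality forces the weights to be $n$-tuples of non-negative integers. Next, the Lie--Kolchin theorem applied to the Borel subgroup $B$ of upper triangular matrices furnishes a joint eigenvector $v \in V$, necessarily fixed by the unipotent radical $U$ and sitting in a single weight space $V_{\mu}$. A short inspection using the simple root subgroups $U_{\al_i}$ forces $\mu$ to be dominant: any non-dominant $\mu$ would produce via some $U_{\al_i}$ a vector of strictly greater weight. Hence $\mu$ is a partition $\la$ with at most $n$ parts.

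When $V$ is irreducible, the space $V^U$ is one-dimensional and $\la$ is a well-defined isomorphism invariant of $V$. A standard argument then shows that $\la$ determines $V$ up to isomorphism: if $V, W$ are irreducibles with highest-weight vectors $v_V, v_W$ of common weight $\chi_\la$, the cyclic $\GL_n$-submodule of $V \oplus W$ generated by $v_V + v_W$ projects surjectively onto each factor by $\GL_n$-equivariant maps, which Schur's lemma forces to be isomorphisms.

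The crux of the proof is therefore to show that each Schur module $\Sc^{\la}(k^n)$ is itself irreducible with highest weight $\chi_\la$. The latter is already visible from equation~(\ref{e:12}): $e_{U(\la)}$ is fixed by $U$ with weight $\chi_\la$. For irreducibility, any nonzero $\GL_n$-stable subspace $W \subset \Sc^{\la}(k^n)$ must contain a $U$-invariant vector (again by Lie--Kolchin), and weight analysis against the semistandard basis $\{e_T\}$ pins this vector down to a scalar multiple of $e_{U(\la)}$, since $U(\la)$ is the \emph{unique} semistandard tableau of shape $\la$ whose content yields weight $\chi_\la$ (the $\la_i$ copies of $i$ are forced to fill row $i$). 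It then remains to check that $e_{U(\la)}$ generates all of $\Sc^{\la}(k^n)$ as a $\GL_n$-module, which follows by applying successive elementary matrices $I + t E_{ij}$ to $e_{U(\la)}$ and matching the outputs against the basis $e_T$. Since this entire classification --- Schur's theorem --- is expounded in detail in \cite{fu}, a proof by direct citation is also legitimate.
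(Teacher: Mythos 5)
The paper offers no proof of this proposition at all: it is Schur's classical classification of the irreducible polynomial representations of $\GL_n$ in characteristic zero, and the paper simply relies on the theory expounded in \cite{fu}. Your closing remark that a proof by citation is legitimate is therefore exactly the paper's own treatment, and your sketch of the standard highest-weight argument is correct in outline. Two steps are phrased loosely and would need repair in a written-out version. First, dominance of the weight $\mu$ of the $B$-eigenvector is not obtained by applying a positive simple root subgroup $U_{\al_i}$ to that vector (it is already fixed by all of $U$, so nothing new is produced); the usual arguments use either $\mathfrak{sl}_2$-theory for the corresponding negative root vector, or the facts that the multiset of weights is invariant under the Weyl group (here $S_n$ permuting the exponents) and that $\mu$ is maximal because $V$ is spanned by $U^{-}\cdot v$. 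Second, in the irreducibility of $\Sc^{\la}(k^n)$ you identify a $U$-fixed vector of a nonzero submodule with a multiple of $e_{U(\la)}$ by noting that $U(\la)$ is the unique semistandard tableau of shape $\la$ and content $\la$; but a priori that $U$-fixed vector could sit in a weight space $\mu \neq \chi_{\la}$, so one must first show that $\bigl(\Sc^{\la}(k^n)\bigr)^U$ is one-dimensional, e.g. by the standard leading-tableau (straightening) argument. Both points are routine and covered by \cite{fu}, so your proposal is acceptable and consonant with the paper.
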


\subsection{The height}

We define now a specific multiplicative height $H$ on   $\Sc^{\la}(k^n)$ by its local factors :
\be H(x) = h_0 \prod_{v \in \val} H_v(x) \ee
where $h_0$ is a normalisation constant and $H_v$ is a norm on the $k_v$-vector space $\Sc^{\la}(k_v^n)$ compatible with the absolute value $|\cdot|_v$.

Contrary to the generally used heights, we do not define the local norm with respect to a unique foregone basis, because the heights obtained in this way are not invariant under the action of the compact group 
$K_n (\A_k) = \displaystyle \prod_{v \in \val_{\infty}} O_n(k_n) \, \times \, \prod_{v \in \val_f} \GL_n(\ido_v)$. The rest of this paragraph is devoted to the description of the height that the action of this group leaves invariant.

\subsubsection{Infinite places :}

Let $v \in \val_{\infty} $ be an infinite place. There exists on the space of polynomials $\D^{\la}(k_v^n)$ a Euclidean (or Hermitian) scalar product for which two monomials are orthogonal when they are distinct and for which the scalar square of the monomial $z_{k_{1,1}}^{\al_{1,1}} z_{k_{1,2}}^{\al_{1,2}} \cdots z_{k_{t,n}}^{\al_{t,n}}$ is $\al ! = \al_{1,1} ! \al_{1,2} ! \cdots \al_{t,n} !$. It can be checked that the scalar product can also be described by
$$\forall P,Q \in k_v[Z], \qquad \langle P,Q \rangle = L_0 \left(  P(\underline{\pa}) \, \overline{Q}(Z ) \right) .$$ 
where the matrix $\underline{\pa}$ is the matrix of the derivation operators 
$$\underline{\pa} = \begin{pmatrix} \pa_{z_{1,1}}  & \dots & \pa_{z_{1,n}} \\
\vdots & & \vdots \\
\pa_{z_{\la_1^*, 1}} & \dots & \pa_{z_{t, n}} 
\end{pmatrix} $$
and $L_0$ is the operator that evaluates the polynomials in zero.

\begin{exem} Consider the two polynomials $P(Z) = z_{1,1} + z_{2,2}$ and $Q(Z) = z_{1,1}^2 + z_{2,2}$, then we have $P(\underline{\pa}) \, \overline{Q}(Z ) = (\pa_{1,1} + \pa_{2,2})( z_{1,1}^2 + z_{2,2}) = 2 z_{1,1} + 1$, and their scalar product is just $\langle P,Q \rangle = 1$.
\end{exem}

\begin{defi} We set in that case 
$$H_v(X) = \ds \frac{\dim \left( \Sc^\la(k ^n) \right) }{n!} \langle X,X\rangle ^{d_v/2},$$
for any vector $X $ of the Schur module $\Sc^{\la}(k_v^n) \overset{\phi}{\simeq} \D^{\la}(k_v^n)$.
\end{defi}

\begin{lemm} \label{l:12} When $v$ is a real place (respectively when $v$ is a complex place), the action of the orthogonal group $O_n(\R)$ (respectively the unitary group $U_n(\C)$) on $\Sc^{\la}(k_v^n)$ is orthogonal (respectively unitary).
\end{lemm}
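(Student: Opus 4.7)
The plan is to transfer the question from $\Sc^\la(k_v^n)$ to its polynomial realisation $\D^\la(k_v^n) \subset k_v[Z]$ via the isomorphism $\phi$, and to exploit the intrinsic formula $\langle P, Q\rangle = L_0(P(\underline{\pa})\overline{Q}(Z))$. Under $\phi$, the action of $\pi_{\la}(g)$ becomes right multiplication on polynomials, $\phi(\pi_{\la}(g).X)(Z) = P(Zg)$ with $P = \phi(X)$. Since $\D^{\la}(k_v^n)$ is $\GL_n(k_v)$-stable by functoriality, it suffices to prove that the Fischer-type pairing on the ambient space $k_v[Z]$ --- defined by the very same formula --- is invariant under the substitution $P(Z) \mapsto P(Zg)$ whenever $g \in O_n(\R)$ (real place) or $g \in U_n(\C)$ (complex place).

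The core of the argument is a change of variables on the matrix of indeterminates. Set $Y = Zg$ in the real case, and $Y = Z\overline{g}$ in the complex case. In either case $Z \mapsto Y$ is a linear isomorphism fixing the origin, so the evaluation operator $L_0$ is the same in the $Z$ or the $Y$ coordinates. A column-wise application of the chain rule identifies the matrices of partials as $\underline{\pa}_Z = \underline{\pa}_Y \cdot g^T$ in the real case and $\underline{\pa}_Z = \underline{\pa}_Y \cdot g^*$ in the complex case. Using the defining relations $g^T g = \I_n$ or $g^* g = \I_n$, a direct computation gives
$$[P(Zg)](\underline{\pa}_Z) = P(\underline{\pa}_Z \cdot g) = P(\underline{\pa}_Y),$$
while on the other side the polynomial $\overline{Q(Zg)}$, read as a polynomial in $Z$ with conjugated coefficients, coincides with $\overline{Q}(Z\overline{g}) = \overline{Q}(Y)$. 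Feeding these two identities back into the formula for the pairing collapses $\langle P(Zg), Q(Zg)\rangle$ to $L_0(P(\underline{\pa}_Y) \overline{Q}(Y)) = \langle P, Q\rangle$.

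The only real hazard is bookkeeping: one must be careful that complex conjugation in the defining formula turns the right multiplication by $g$ on the $Q$-side into a multiplication by $\overline{g}$ (not $g$) and that the chain rule correspondingly produces $g^*$ rather than $g^T$ in the matrix of partial derivatives, so that the unitarity identity $g^* g = \I_n$ comes out at precisely the right place. Once these conventions are aligned, the two cases collapse in a single step and no further analytic ingredient is needed.
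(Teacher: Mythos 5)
Your proof is correct, and it rests on the same foundation as the paper's: both work in the polynomial model $\D^{\la}(k_v^n)$, where the action becomes $P(Z)\mapsto P(Zg)$, and both reduce the lemma to the invariance of the apolarity pairing $\langle P,Q\rangle = L_0\bigl(P(\underline{\pa})\,\overline{Q}(Z)\bigr)$ under this substitution. The mechanism differs, though: the paper establishes, by induction on the degree of monomials and the Leibniz rule, the equivariance identity $[(\om.P)(\underline{\pa})][(\om.Q)(Z)] = \om.[P(\underline{\pa})Q(Z)]$ and then evaluates at zero, whereas you perform a one-shot linear change of variables $Y=Zg$ (resp.\ $Y=Z\overline{g}$), identify $\underline{\pa}_Z=\underline{\pa}_Y\,\transp{g}$ (resp.\ $\underline{\pa}_Y g^{*}$) by the chain rule, and let $\transp{g}g=\I_n$ (resp.\ $g^{*}g=\I_n$) collapse the pairing directly; this is legitimate because the entries of $\underline{\pa}$ commute, so substituting them into the polynomial identity $P(Zg)$ is a ring homomorphism. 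Your route avoids the induction entirely and, as a bonus, makes explicit the conjugation bookkeeping ($\overline{(g.Q)}(Z)=\overline{Q}(Z\overline{g})$, hence $g^{*}$ rather than $\transp{g}$) in the complex case, which the paper dismisses with ``the complex case can be demonstrated similarly''; the paper's inductive argument, in exchange, exhibits the stronger operator-level equivariance, not just the equality of the evaluations at zero.
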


\begin{proof} We handle the real case ; the complex case can be demonstrated similarly.
Let us take $\om \in O_n(\R)$, and $P,Q \in \D^{\la}(\R^n)$. Since the action of an isometry commutes with the derivation with respect to a polynomial of differential operators, we get the equality
\be [(\om.P) (\underline{\pa})][(\om.Q) (Z  )] =  \om [P(\underline{\pa})][Q ( Z)] \label{e:14} \ee

Indeed, by linearity, it suffices to show this result on monomials. We proceed by induction. Let us denote by $\om = (( \om_{i,j}))_{1 \le i , j \le n}$ the coefficients of the orthogonal element $\om$.
\begin{itemize}
\item If $P$ and $Q$ are reduced to one variable, let us say $P=z_{i,j}$ and $Q=z_ {i',j'}$, then $ \om. P(\underline{\pa}) = \sum_{k=1}^n \om_{k,j} \pa_{z_{i,k}} $ and $\om . Q = \sum_{k=1}^n z_{i',k} \om_{k,j'} $. Thereby
$$(\om. P(\underline{\pa})) (\om . Q) = \left\{ \begin{matrix} 0& \text{ if } i \neq i' \text{ since the derivatives are all zero}\\ \ds \sum_{k=0}^n \om_{k,j} \om_{k,j'} = 0 & \text{ if } i=i' \text{ and } j \neq j' \text{ by orthogonality}
\\ \ds \sum_{k=0}^n \om_{k,j} \om_{k,j'} = 1  & \text{ if } i=i' \text{ and } j = j' \text{ by orthogonality}\end{matrix} \right. .$$ 
\item If $P = z_{i,j}$ is of degree one, and if we suppose having demonstrated the relation for any monomial $Q$ of degree less than or equal to $n$, let $Q$ be a monomial of degree $n+1$ that we write $Q=z_{i',j'} Q_1$, 
\Bea (\om. P(\underline{\pa})) (\om . Q) &=& ((\om. P(\underline{\pa}))  (\om. z_{i',j'})) (\om.Q_1) + z_{i',j'} (\om. P(\underline{\pa})).Q_1 \\
&=& \om .( P(\underline{\pa}) z_{i,j}) + (\om.z_{i',j'}) \, \om P(\underline{\pa}) . (\om.Q_1) \Eea
Using the induction hypothesis, $(\om. P(\underline{\pa})) (\om . Q) = \om.(P(\underline{\pa})  Q)$. 
\item Eventually, let us suppose the relation demonstrated for any monomial $Q$ and any monomial $P$ of degree less or equal to $n$. Let $P$ be a monomial of degree $n+1$, let us say $P = z_{i,j} P_1$, then, 
\Bea (\om. P(\underline{\pa})) (\om . Q) &=& (\om. P_1(\underline{\pa})) [ (\om. \pa_{z_{i,j}}) (\om . Q) ] \\
&=&  (\om. P_1(\underline{\pa})) [ \om . (\pa_{z_{i,j}}  Q)] \\
&=& \om. (P_1(\pa) \pa_{z_{i,j}}  Q) \\
&=& \om. (P(\pa)   Q) . \Eea
which completes the proof of the relation (\ref{e:14}).
\end{itemize}
When this relation is evaluated in zero, we get as expected
$$ \langle \om.P, \om.Q \rangle =  \langle P, Q \rangle .$$
\end{proof}

\begin{defi}The notation $\De_{\ell}$ symbolises henceforth the principal minor of order $\ell$ of any matrix (of size greater than or equal to $\ell \times \ell$).
\end{defi}
We express by its medium the local height $H_v(\pi_{\la}(g) . e_{U(\la)})$ in a pleasant form.

\begin{prop} Let $\pi_{\la}$ be the representation defined above, $e_{U(\la)}$ the highest weight vector defined by (\ref{e:12}) and $g \in \GL_n(k)$, the local height of $\pi_{\la}(g) . e_{U(\la)}$ is equal to the product of the following minors :
\be H_v ( \pi_{\la}(g) . e_{U(\la)} ) = \left( \De_{\la_1^*} (\transp{g} g) \De_{\la_2^*} (\transp{g} g) \dots \De_{\la_s^*} (\transp{g} g) \right)^{d_v / 2} \, H_v (e_{U(\la)} )\ee
where $v$ is an archimedean place.
\label{p:13}
\end{prop}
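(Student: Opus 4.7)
The plan is to reduce the computation to the case of an upper triangular matrix via an Iwasawa--type decomposition, and then exploit the polynomial realisation $\phi$ of the Schur module to make the action of $\pi_{\la}$ on the highest weight vector completely explicit.

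First I would apply the $QR$ (or Iwasawa) decomposition to write $g = k\,b$, where $k$ lies in $O_n(\R)$ (respectively $U_n(\C)$ when $v$ is complex) and $b$ is upper triangular. By Lemma~\ref{l:12}, $\pi_{\la}(k)$ is an isometry of the scalar product on $\Sc^{\la}(k_v^n)$, so $\langle \pi_{\la}(g) e_{U(\la)}, \pi_{\la}(g) e_{U(\la)} \rangle = \langle \pi_{\la}(b) e_{U(\la)}, \pi_{\la}(b) e_{U(\la)} \rangle$; moreover $\transp{g}g = \transp{b}b$, so the proposition is equivalent to the same statement with $g$ replaced by $b$.

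Next I would compute $\pi_{\la}(b)\, e_{U(\la)}$ explicitly. Under $\phi$, the highest weight vector $e_{U(\la)}$ is sent to the product $\prod_{\ell=1}^{s} D_{(1,2,\dots,\la_\ell^*)}$ of the leading principal minors of the generic matrix $Z$, because the $\ell$-th column of the tableau $U(\la)$ is filled with $1,2,\dots,\la_\ell^*$. The action of $b$ corresponds to $Z \mapsto Zb$. Since $b$ is upper triangular, the leading $r \times r$ block of $Zb$ factors as $Z^{(r)} b^{(r)}$ (writing $(\cdot)^{(r)}$ for the principal $r\times r$ block), so
\[
D_{(1,\dots,r)}(Zb) \;=\; \det(b^{(r)})\, D_{(1,\dots,r)}(Z) \;=\; \Bigl(\prod_{k=1}^{r} b_{k,k}\Bigr)\, D_{(1,\dots,r)}(Z).
\]
This yields $\pi_{\la}(b)\, e_{U(\la)} = c(b)\, e_{U(\la)}$ with $c(b) = \prod_{\ell=1}^{s} \det b^{(\la_\ell^*)}$.

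Finally I would identify $|c(b)|^{2}$ with the product of the principal minors of $\transp{b} b$: the leading $r \times r$ block of $\transp{b} b$ is $\transp{(b^{(r)})}\, b^{(r)}$, whose determinant is $|\det b^{(r)}|^{2}$, so $\De_r(\transp{g}g) = \De_r(\transp{b}b) = |\det b^{(r)}|^{2}$. Combining with the isometry step gives
\[
H_v\bigl(\pi_{\la}(g) e_{U(\la)}\bigr) \;=\; |c(b)|^{d_v}\, H_v(e_{U(\la)}) \;=\; \Bigl( \prod_{\ell=1}^{s} \De_{\la_\ell^*}(\transp{g}g) \Bigr)^{d_v/2} H_v(e_{U(\la)}).
\]
The main obstacle is really the bookkeeping of the middle step: one must check, from the definitions of $U(\la)$ and of $\phi_T$, that $\phi(e_{U(\la)})$ is indeed the product of leading principal minors of $Z$, and that right multiplication by an upper triangular $b$ rescales each of these minors by $\det b^{(\la_\ell^*)}$. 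Once these combinatorial facts are in place, the proposition reduces to the standard Cholesky identity linking a triangular matrix to the principal minors of its Gram matrix.
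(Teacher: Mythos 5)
Your argument is correct and follows essentially the same route as the paper: reduce to a triangular matrix using the Iwasawa/QR decomposition together with the unitary invariance of Lemma \ref{l:12}, observe that the triangular part acts on $e_{U(\la)}$ by a scalar, and identify the square modulus of that scalar with $\prod_{\ell} \De_{\la_\ell^*}(\transp{g}g)$. The only (harmless) difference is that you verify the eigenvalue $c(b)=\prod_{\ell}\det b^{(\la_\ell^*)}$ directly in the polynomial model $\phi(e_{U(\la)})=\prod_{\ell} D_{(1,\dots,\la_\ell^*)}$, whereas the paper splits $g=udb$ and invokes the highest-weight property $\pi_{\la}(db)e_{U(\la)}=\chi_{\la}(d)e_{U(\la)}$.
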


\begin{proof} The place $v$ being fixed, the group $\GL_n(k)$ can by embedded into $\GL_n(\C)$. Take $g \in \GL_n(k)$, $g$ can be written according to Iwasawa decomposition into the product $g = u d b$ where $u$ is a unitary matrix, $d = \mathrm{Diag}(\al_1, \dots, \al_n)$ is a diagonal matrix and $b$ a unipotent upper triangular matrix.

\begin{itemize}
\item We know already that the action of $U_n(\C)$ does not alter the norm (lemma \ref{l:12}). Thus  $H_v ( \pi_{\la}(g) . e_{U(\la)} ) =  H_v ( \pi_{\la}( db) . e_{U(\la)} )$. On the other hand, for any $l$, $\De_{l}(\transp{g}g) = \De_{l}(\transp{b} \transp{d} d b)$. Thus we can assume that $u= \Id$. \\

\item Besides, as $e_{U(\la)}$ is the highest weight vector, $\pi_{\la}(d b ) . e_{U(\la)} = \chi_{\la} (d) e_{U(\la)} = \left( \prod_{i=1}^n \al_i^{\la_i} \right) e_{U(\la)} $, hence
$$H_v\left( \pi_{\la}(d b ) . e_{U(\la)}\right) = \left| \prod_{i=1}^n \al_i^{\la_i} \right| H_v ( e_{U(\la)})  $$
But $db$ is an upper triangular matrix. In particular, $db$ can be written block by block in the shape $\begin{pmatrix} m_1 & * \\ 0 & m_2 \end{pmatrix}$ with sizes $\ell$ and $n-\ell$ and thus $\transp{(db)} d b = \begin{pmatrix} \transp{m_1} m_1 & * \\ * & * \end{pmatrix}$, which justifies the equality $\De_{\ell} ( \transp{(db)} d b)  = \|\De_{\ell} ( d b)\|^2$. Accordingly, $\De_{\ell} ( \transp{b} \transp{d} d b) = |\De_{\ell} (d)|^{2/d_v} = \prod_{i=1}^{\ell} |\al_i|^{2/d_v}$.
But, 
$\prod_{\ell=1}^s \De_{\la_{\ell}^*} (\transp{g} g)  = \prod_{i=1}^{n} |\al_i^{\la_i}|^{2/d_v} $ since the term $\al_i$ appears exactly $\la_i$ times among the determinants $(\De_{\la_{\ell}^*} (\transp{g} g))_{1 \le \ell \le s}$. This last equality ends up the proof the proposition. \end{itemize} \end{proof}

\subsubsection{Finite places :} We know already a basis of $\Sc^{\la}(k_v^n)$, given by the vectors $e_T$ where the $T$ are \emph{Young tableaux}, that is a diagram inscribed with increasing integers along columns and non-decreasing integers along lines, all coefficients being in $\Np{1}{n}$. 
\begin{defi}For a finite place $v$, we set the local norm  $H_v(x)$  as the maximum of the absolute value of the coordinates of $x$ in this basis.
\end{defi}
 In this way, we have in addition that the Schur module of the cartesian product of the ring of integers is given by the following equality
$$\Sc^{\la}(\ido_v^n)=\{ X \in \Sc^{\la}(k_v^n) ; H_v(X) \le 1 \}$$
which motivates the choice of our local norm.

\subsubsection{Normalisation :} The constant $h_0$ is chosen such that $H(e_{U(\la)})=1$. Examples suggest that $h_0$ should be equal to $1$.

\begin{rema} In the case when $\pi_{\la}$ is the representation on the space of exterior powers of $k^n$, the height we get is the one studied as example 1 in \cite{wa1} page 43, since the basis used for finite places is also orthonormal for the scalar products of the infinite places. As a result, the constant studied in this article matches with the constant defined by J. Thunder in \cite{thu2}.
\end{rema}

\section{A second formulation with Hermite--Humbert forms}
\subsection{Adelic definition of the constant}

Let us recall before we start the definition of the constant we intend to study. From the adelic view point, this constant is given by
\be \ga_{n, \la} = \max_{g \in \GL_n(\A_k)^1} \min_{\ga \in GL_n(k)} H( \, \pi_{\la}(g\ga) \,\, e_{U(\la)} \, )^{2/d} \ee
which is the constant defined by the equality  (\ref{eq:1}) associated to the general linear group $\GL_n$, to the height suggested in the previous section and to the representation $\pi_{\la}$ also previously described.

\begin{defi} Let us denote by $\Sc_{\sharp}^{\la}(k^n)$ the set of non zero vectors $X \in \Sc^{\la}(k^n)$ that have the shape

$$  \hoogte=15pt  
 \breedte=18pt X = \begin{young} x_t \cr
 : \cr
 x_2 & x_2  \cr x_1  & x_1  & . \, .  & x_1   \cr \end{young},$$
\newdimen\hoogte    \hoogte=15pt
with $(x_{\ell})_{1 \le \ell \le t} \subset k^n$. We shall call these particular vectors \emph{flag vectors}, in reference to their characterising, up to a multiplicative constant, the flag of nested subspaces spanned by  $(x_i)_{1 \le i \le \la_k^*}$ when $k$ varies between $1$ and $s$.
\end{defi}

\begin{defi} When an automorphism  $A \in \GL_n(\A_k)$ is fixed, it is possible to define a  \emph{twisted height} $H_A$ by the formula $H_A(X) = H(\pi_{\la}(A)(X))$ which applies to any vector  $X \in \Sc^{\la}(k^n)$ as well as any flag $\mathscr{D}$ represented by a flag vector $X$. The \emph{Hermite constant} $\ga_{n, \la}$ is then the smallest constant $c$ such that there exists a flag  $\mathscr{D}$ fulfilling the inequality
$$ H_A(\mathscr{D}) \le c^{1/2} |\det ( A) |_{\A_k}^{|\la|/n} $$
for any automorphism $A \in \GL_n(\A_k)$.
By homogeneity of the inequality, the constant $\ga_{n, \la}$ is still the smallest constant $c$ such that there exists a flag $\mathscr{D}$ satisfying
$$ H_A(\mathscr{D}) \le c^{1/2} $$
for any automorphism $A \in \GL_n(\A_k)$ of determinant of norm $|\det(A)|_{\A_k} = 1$. \end{defi}

\begin{rema}
As it can be acknowledged, the constant $\ga_{n, \la}$ depends only the heights of flag vectors. A more sophisticated way to describe the heights could have been to consider the one obtained  through the embedding (\ref{e:pldrap}) with a section and a metrised line bundle on the flag variety $P_\la \backslash G$ itself. This latter construction turns out to define the same heights as we did.

To state a precise result, we shall use the following notation. The flag $P_\la \backslash G$ is actually the collection of nested subspaces the dimension of which is equal to one of the $\la_i^*$ for some integer $i$ between $1$ and $s$. The letter $a_d$ will refer to number of occurence of the integer $d$ among the parts of the partition $\la^*$, or in other words the number of times the subspace of dimension $d$ is repeated.

Using the maps that send one of the nested subspace of dimension $d$ of the flag into the grassmanian space $\mathscr{G}_d (k^n) $ of the corresponding dimension $d$, the maps that send a subspace $V$ into $\bigwedge^{\dim V} V$, and the Veronese embeddings that send $V$ into $\text{Sym}^a V$, we dispose of the following chain of embeddings :
$$ P_\la \backslash G \subset \prod_{d \in  \{\la_i^*, \, 1 \le i \le s \}}  \mathscr{G}_{d} (k^n)  \subset \prod_{d \in  \{\la_i^*, \, 1 \le i \le s \}} \mathbb{P} \left( \bigwedge^{d} k^n\right) \subset \cdots$$
$$\cdots \subset \prod_{d \in  \{\la_i^*, \, 1 \le i \le s \}} \mathbb{P} \left( \text{Sym}^{a_d}  (\bigwedge^d k^n) \right) \subset \mathbb{P} \left( \bigotimes_{d \in  \{\la_i^*, \, 1 \le i \le s \}}  \text{Sym}^{a_d}  ( \bigwedge^d k^n) \right) $$

It is classical to attach to any grassmanian space $\mathscr{G}_d $  the line bundle $V \mapsto (V, \bigwedge^{\dim V} V)$ whereupon the metric is give using the standard metric for the ultrametric places and the Fubini--Study metric for the archimedean places (see section 2.9 in \cite{bg}). Taking the $a$-th tensor product gives a metrised line bundle on $\text{Sym}^a$ and by tensoring again, we get a metrised line bundle of $\mathbb{P} \left( \bigotimes_{d \in  \{\la_i^*, \, 1 \le i \le s \}}  \text{Sym}^{a_d}  ( \bigwedge^d k^n) \right)  $ which we denote $\mathcal{O}(a_1, \dots, a_t)$. 

The metrised line bundle $\mathcal{O}(a_1, \dots, a_t)$ on $P_\la \backslash G$  is equal to the line bundle $\mathcal{O}(1)$ obtained through the embedding of the flag variety into $\mathbb{P}(\Sc^\la (k^n))$ (see for more detail section 9.3 of \cite{fu}) and the metric we defined previously. See also description in \cite{thu1}.
\end{rema}

\subsection{Definition based on Hermite--Humbert forms} 

We first define the evaluation of a Humbert form at a flag vector.

\begin{defi} Let $\mathcal{A} = (A_j)_{1 \le j \le r_1+r_2} \in \fhh$ be a Hermite-Humbert form and take a flag vector $X \in \Sc_{\sharp}^{\la}(k^n)$, then the notation $\mathcal{A}[X]$ symbolises the evaluation of $\mathcal{A}$ in $X$, which means :
\Bea \mathcal{A}[X] &=& \prod_{j=1}^{r_1+ r_2} \prod_{\ell=1}^t   \Big(\De_{\la_{\ell}^*} \big(\transp{[x_1, \dots, x_s]} A_j [x_1, \dots, x_s] \big )\Big)^{d_j} \\
&=& \prod_{\ell=1}^t \left( \prod_{j=1}^{r_1} \De_{\la_{\ell}^*} \big(\transp{[x_1, \dots, x_s]} A_j [x_1, \dots, x_s] \big) \; \prod_{j=r_1+1}^{r_1+r_2} \Big(\De_{\la_{\ell}^*} \big(\transp{[x_1, \dots, x_s]} A_j [x_1, \dots, x_s] \big)\Big)^2 \right) \Eea
\end{defi}

\subsubsection{Minimum and determinant of a form} When $L$ is a lattice of $k^n$, we know that $L$ admits a pseudo basis and can be decomposed into
\be \label{eq:21} L = \idc_1 u_1 \oplus \dots \oplus \idc_n u_n \ee
where the $(\idc_i)_{1 \le i \le n}$ are fractional ideals and the system $(u_i)_{1\le i \le n}$ is a vectorial basis of $k^n$. Following \cite{cou2}, we define the determinant of a Hermite--Humbert quadratic form  $\mathcal{A} = (A_j)_{1 \le j \le r_1+r_2}$ with respect to the lattice $L$ by the following product of the determinants computed in the system of vectors $(u_i)_{1 \le i \le n}$ :
\be {\det}_L (\mathcal{A}) = \norm( \idc_1 \idc_2 \dots \idc_n ) \prod_{j=1}^{r_1+r_2}  \big( {\det}_{(u_1, \dots, u_n)} (A_j) \big)^{d_{v_j}}   \ee

We define also, again with respect to a lattice $L$ of $k^n$ and a vector $X$ of the Schur module  $\Sc^{\la}(k^n)$, the fractionnal ideal $\idA_X^L$ according to the formula :
\be (\idA_X^{L})^{-1} = \left\{ \al \in k, \; \al X \in \Sc^{\la}(L) \right\} \ee
In the case when $\la$ is the partition of 1, the ideal $\idA_X^{L}$ is nothing else than the greatest common divisor of the coordinates of $X$ (in that case $X$ is simply a vector of $k^n$).

We define the minimum of a Hermite--Humbert form $\mathcal{A} \in \fhh$ relatively to the lattice $L$ as the quantity 
\be m_{L} (\mathcal{A}) = \min_{X \in \Sc_{\sharp}^{\la}(L)} \frac{\mathcal{A}[X] }{\norm(\idA_X^L)} .\ee

\subsubsection{A new Hermite constant} We call generalised Hermite invariant of the Hermite--Humbert form $\mathcal{A}$ relative to the lattice $L$ the number
\be \ga_L (\mathcal{A}) = \frac{m_L(\mathcal{A}) }{ \left({\det}_L \mathcal{A} \right)^{\frac{m}{n}} }.\ee
We eventually define the constant
\be \ga_L = \sup_{\mathcal{A} \in \fhh} \ga_L(\mathcal{A}) \ee

Remember that the Steinitz class of lattice means the ideal class $\idc = \idc_1 \idc_2 \cdots \idc_n$ (with the notation of equality \ref{eq:21}). If $L$ and $L'$ are two lattices with the same Steinitz class, then they are isomorphic and as a result, the constants $\ga_L$ and $\ga_{L'}$ coincide. The canonical basis of $k^n$ being $(e_i)_{1 \le i \le n}$, let us fix the following representatives for each class of lattice
\be \forall \, 1 \le \iota \le h, \quad L_{\iota} = \ido e_1 \oplus \dots \oplus \ido_{n-1} e_{n-1} \oplus \ida_{\iota} e_n \ee
Let us denote by $\hat{\ga}_{n,\la}$ a new Hermite constant
\be \hat{\ga}_{n,\la} = \max_{L \text{ lattice of } k^n} \ga_L = \max_{1 \le \iota \le h} \ga_{L_{\iota}} \ee

\subsection{Equivalence of the definitions}

\begin{prop}
The constant $\hat{\ga}_{n, \la}$ is equal to the constant $\ga_{n, \la}$ introduced by T. Watanabe.
\end{prop}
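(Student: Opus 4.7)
The plan is to show that every element $g \in \GL_n(\A_k)^1$ essentially encodes a pair (Hermite--Humbert form, lattice) and that under this correspondence the quantity $\min_{\ga\in\GL_n(k)} H(\pi_\la(g\ga)e_{U(\la)})^{2/d}$ matches $\ga_L(\mathcal A)$, after which taking suprema on both sides yields the two constants. The engine that transports us between the adelic and the classical worlds is the decomposition
\[
\GL_n(\A_k) \;=\; \coprod_{\iota=1}^{h} \GL_n(k)\, y_\iota\, \bigl(\GL_n(k_\infty)\times \prod_{v\in\val_f}\GL_n(\ido_v)\bigr)
\]
where the $y_\iota$ represent the $h$ double cosets indexed by the ideal class group (this is just strong approximation / the relationship between class number and the double-coset space). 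Combined with the Iwasawa decomposition at archimedean places and the $K_n(\A_k)$-invariance of $H$ (Lemma \ref{l:12} at infinite places, and the definition of $H_v$ as a sup-norm in the $e_T$-basis at finite places), this allows me to assume $g$ is of a very restricted form.

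First I would, given $g\in\GL_n(\A_k)^1$, replace it within its class modulo $\GL_n(k)$ and $K_n(\A_k)$ by a representative whose archimedean components $(g_{\si_j})_j$ are upper triangular (Iwasawa) and whose finite component acts as the identity on the lattice $L_\iota$ corresponding to the chosen Steinitz class. To such a representative I attach the Hermite--Humbert form $\mathcal A = (\transp{g_{\si_j}} g_{\si_j})_{j}$, which after the customary rescaling lives in $\fhh$ (the $\GL_n(\A_k)^1$ condition ensures that the global product of determinants is $1$, feeding the determinant-$1$ normalisation). Conversely any $\mathcal A\in\fhh$ and any $L_\iota$ arise this way; this sets up the bijection between $\GL_n(k)\backslash \GL_n(\A_k)^1 / K_n(\A_k)$ and pairs (form, lattice class).

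Next I would evaluate $H(\pi_\la(g\ga)e_{U(\la)})$ for $\ga\in\GL_n(k)$ place by place. Since the action of $\GL_n(k)$ on $e_{U(\la)}$ only depends on the image of $\ga$ in the flag variety, and since $\pi_\la(\ga)e_{U(\la)}$ produces a flag vector $X_\ga\in\Sc_\sharp^\la(k^n)$, at each archimedean place Proposition \ref{p:13} gives
\[
H_{\si_j}(\pi_\la(g_{\si_j}\ga)e_{U(\la)})
\;=\; \prod_{\ell=1}^{s}\De_{\la_\ell^*}\!\bigl(\transp{X_\ga}\,\transp{g_{\si_j}}g_{\si_j}\,X_\ga\bigr)^{d_j/2}\cdot H_{\si_j}(e_{U(\la)}),
\]
so the product over archimedean places equals $\mathcal A[X_\ga]^{1/2}$ up to the fixed constant $\prod_{v\in\val_\infty} H_v(e_{U(\la)})$. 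At the finite places, by definition $H_v(X_\ga)$ reads off the maximum $v$-adic absolute value of the coordinates of $X_\ga$ in the $e_T$-basis, and using that the finite part of $g$ is adjusted to $L_\iota$, the product $\prod_{v\in\val_f} H_v$ applied to $\pi_\la(g\ga)e_{U(\la)}$ produces precisely $\norm(\idA_{X_\ga}^{L_\iota})^{-1/2}$ by the very definition of $\idA_X^L$. Absorbing these contributions and the normaliser $h_0$, the product formula together with $|\det g|_{\A_k}=1$ yields
\[
H(\pi_\la(g\ga)e_{U(\la)})^{2/d}
\;=\; \bigl(\mathcal A[X_\ga]/\norm(\idA_{X_\ga}^{L_\iota})\bigr)^{1/d}\cdot({\det}_{L_\iota}\mathcal A)^{-m/(nd)}\cdot c,
\]
with $c$ a constant forced to be $1$ by the choice $H(e_{U(\la)})=1$.

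Minimising over $\ga\in\GL_n(k)$ amounts, after identifying flags with flag vectors, to minimising over $X\in\Sc_\sharp^\la(L_\iota)$ (since rescaling $X_\ga$ by an element of $k^\times$ changes neither the flag nor the ratio $\mathcal A[X]/\norm(\idA_X^L)$). Hence the inner minimum equals $m_{L_\iota}(\mathcal A)/(\det_{L_\iota}\mathcal A)^{m/n}=\ga_{L_\iota}(\mathcal A)$. Taking the supremum over $g\in \GL_n(\A_k)^1$, which via the bijection above is equivalent to taking the supremum over all pairs $(\mathcal A,\iota)\in \fhh\times\{1,\dots,h\}$, produces $\max_\iota \ga_{L_\iota}=\hat\ga_{n,\la}$.

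The main obstacle will be the bookkeeping of normalisations in the third step: one has to check that the rescaling used to force $\mathcal A\in\fhh$ (i.e. $\det A_j=1$) together with the $\GL_n(\A_k)^1$ condition and the $|\det A|^{m/n}_{\A_k}$ twist built into $\ga_{n,\la}$ all conspire to leave only the factor ${\det}_{L_\iota}(\mathcal A)^{-m/n}$ on the right. This is the one point where a genuine computation (rather than invariance arguments) is needed, and it is where the product formula and the normalising constant $h_0=1$ are really used.
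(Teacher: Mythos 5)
Your proposal follows essentially the same route as the paper: the class-number double-coset decomposition of $\GL_n(\A_k)$, reduction to archimedean components via $K_n(\A_k)$-invariance, Proposition \ref{p:13} to convert the infinite-place heights into $\mathcal{A}[X]$, and the coordinate description of the finite-place norms to produce $\norm(\idA_X^{L_\iota})^{-1}$, with the identification of $\{\pi_\la(\ga)e_{U(\la)}\}$ with flag vectors. The normalisation bookkeeping you defer is handled in the paper exactly as you anticipate, by keeping the explicit denominator $|\det g\la_\iota|_{\A_k}^{2m/n}$ and using $\la_\iota L_\iota = L_1$ together with $|\det\la_\iota|_{\A_k}=\norm(\ida_\iota)$, so your plan is sound.
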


\begin{proof}
The group $\GL_n(\A_k)$ admits a double cossets decomposition (see \cite{pla}),
\be \GL_n(\A_k) = \bigsqcup_{\iota = 1}^{h} \GL_n(\A_{k,\infty}) \, \la_{\iota} \, \GL_n(k) \ee
 obtained by the action of $\GL_n(k)$ on the right and the action of $\GL_n(\A_{k, \infty})$ on the left, where
\be \GL_n(\A_{k, \infty}) = \prod_{v | \infty} \GL_n(k_v) \times \prod_{v\in \nmid \infty} \GL_n(\ido_v) .\ee 
Here come the details :\\
For $1 \le \iota \le h$, the ideal $\ida_{\iota} \otimes \ido_v$ localised in $k_v$ ($v$ finite place) of the representative $\ida_{\iota}$ of an ideal class becomes a principal ideal and takes the shape of $a_{\iota, v} \ido_v$ where $a_{\iota, v} \in k_v$. Let us denote $a_{\iota} \in \A_k$ the ad\`ele  we get by filling out with $a_{\iota,v} = 1$ for archimedian places $v$. Let $\la_{\iota}$ be the diagonal matrix  $\text{Diag}(1, \dots, 1, a_{\iota}^{-1})$ of $\GL_n(\A_k)$. Let us notice that 
\be \la_{\iota} L_{\iota} = L_1 \qquad \text{and} \qquad |\det \la_{\iota}|_{\A_k} = \norm(\ida_{\iota}) \ee
Then the decomposition of  $\GL_n(\A_k)$ proves to be
$$ \GL_n(\A_k) = \bigsqcup_{\iota = 1}^{h} \GL_n(\A_{k,\infty}) \, \la_{\iota} \, \GL_n(k) $$

The value of $\displaystyle \min_{\ga \in \GL_n(k)}  H \left( \pi_{\la}(g\ga) e_{U(\la)} \right)^2 $ does not depend actually of the class of $g \in \GL_n(\A_k)$ modulo $\GL_n(k)$. Moreover,when $\ga$ runs through $\GL_n(k)$, $\pi_{\la}(\ga) e_{U(\la)}$ describes all the decomposed vectors of $\Sc_{\sharp}^{\la}(k^n)$. Thus, the constant $\ga_{n, \la}$ can be rewritten
\Bea \ga_{\la} &=& \max_{g \in \GL_n(\A_k)/\GL_n(k)} \min_{X \in \Sc_{\sharp}^{\la}(k^n)} \frac{H(\pi_{\la}(g)X)}{ |\det g|_{\A_k}^{\frac{2m}{n}} } \\
&=& \max_{g \in \GL_n(\A_{k, \infty})} \max_{1 \le \iota \le h} \min_{X \in \Sc_{\sharp}^{\la}(k^n)} \frac{H(g\la_{\iota}.X)}{ |\det g\la_{\iota}|_{\A_k}^{\frac{2m}{n}} }
\Eea

Because of the product formula, it can be assumed 	that $X$ runs  only through the set $\Sc_{\sharp}^{\la}(L_{\iota})$. Because of the invariance of the local heights under the action of, $\GL_n(\ido_v)$,  $g$ can be restricted to $\GL_n(k_{\infty})=  \prod_{v | \infty} \GL_n(k_v)$.
To summarize :
$$\ga_{n, \la} = \max_{1 \le  \iota \le h} \max_{g \in \GL_n(k_{\infty})} \min_{X \in \Sc_{\sharp}^{\la}(L_{\iota)}} \frac{H(g \la_i. X)}{ |\det g|_{\A_k}^{\frac{2m}{n}} \left(\norm(\ida_{\iota}) \right)^{\frac{2k}{n}} }  $$
Let us associate to any $g \in \GL_n(k_{\infty})$ the Hermite--Humbert form $\mathcal{A} = \transp{g}g$. The product of the archimedian local heights $\prod_{v | \infty} H_v( g_v \la_{\iota,v} X )$ is equal to $\mathcal{A}[X]$ according to proposition (\ref{p:13}) and the fact that $\la_{\iota,v} = \Id$ when $v$ is an archimedian place. The product of ultrametric local heigths $\prod_{v \nmid \infty} H_v( \la_{\iota,v} X ) $ is exactly $\frac{1}{\norm(\idA_X^{\iota} )}$.\\
Indeed, let us decompose $X = \sum_T X_T e_T$ (where $T$ runs among Young tableaux) in the canonical basis of $\Sc^{\la}$. The action of $\la_{\iota}$ in that basis boils down to multiplying $e_T$ by $a_{\iota}^{- \# \{ (i,j) ; \, T_{i,j} = n \}}$, whence the formula
$$H_v(\la_{\iota} X) = \max_T |a_{\iota}^{- \# \{ (i,j) ; \, T_{i,j} = n \}}|_v |X_T|_v .$$
But $\al X$ belongs to $\ds \Sc^{\la}(L_{\iota}) = \bigoplus_T \ida^{\# \{ (i,j) ; \, T_{i,j} = n \}} \, e_T$ if and only if $|\al X_T|_v \ge |\ida_{\iota}^{\# \{ (i,j) ; \, T_{i,j} = n \}}|_v$ for any finite place $v$. Therefore $|(\idA_X^{\iota})^{-1}|_v = \max_T \frac{|\ida_{\iota}^{\# \{ (i,j) ; \, T_{i,j} = n \}}|_v}{|X_T|_v} $. By bringing all the equalities together, we get the expected formula. Thereby,
\be \ga_{n, \la} = \max_{1 \le  \iota \le h} \max_{\mathcal{A} \in \fhh} \min_{X \in \Sc_{\sharp}^{\la}(L_{\iota)}} \frac{ \mathcal{A}[X]}{ \norm(\idA_X^{\iota}) ({\det}_{L_{\iota}} \mathcal{A})^{\frac{m}{n}}  }  = \hat{\ga}_{n, \la} \ee
as expected\end{proof}

\section{Eutaxy and perfection}

The classical Voronoi theory \cite{vor} is based on two properties of quadratic forms, perfection and eutaxy, which characterise extreme forms, {\it i.e.} those that are a local maximum of the Hermite invariant $\ga$.

In his article \cite{bav1}, Christophe Bavard has set forth a general framework we make ours here to propose appropriate definitions of eutaxy and perfection. The framework is as follows : the Hermite invariant of an object $p$ is defined as the minimum of the evaluation in $p$ of a familly of \og length functions \fg  $(f_c)_{c \in C}$, which are positive real-valued functions, defined on a space $V$ which parametrises the set of objects $p$ in mind. To redound on the classical case of Voronoi theory, $V$ has to be chosen as the space of determinant 1 symmetric definite positive matrices, the length functions are the functions $A \mapsto \transp{u} A u \in \R_+$  and are indexed by the set of non-zero vectors $u \in \Z^n$.  

In this geometrical framework, eutaxy and perfection of an object $p$ can be formulated in terms of properties on gradients of the length functions that attain the minimum ({\it cf. infra.}). Moreover, when the following condition $(\mathscr{C})$ is met, the Voronoi theorem holds, {\it i.e.} the Hermite invariant of $p$ is a local maximum if and only if $p$ is eutactic and perfect.

Let $S(p)$ be the set of indexes $\varsigma \in C$ of the length functions $f_\varsigma$ that are minimal in $p$. The condition $(\mathscr{C})$ can be stated as follows : 
\begin{quote} \og For any point $p$ of $V$, for any subset $T \subset S$, for any non zero vector $x$ orthogonal to the gradients $(\na f_\vartheta)_{\vartheta \in T} $, there exists a  $\mathcal{C}^1$  stalk of curve $c : [0, \eps[ \to V$ such that $c(0)=p$, $c'(0) = x$ and for any $\vartheta \in T$, $f_\vartheta(c(t)) > f_\vartheta (p)$ when $t \in ]0,\eps[$. \fg
\end{quote}

\subsection{Terms reformulation}

In our case, the set of Hermite--Humbert forms are parametrised by the space $\fhh$ embedded with a \emph{Riemannian structure}, and in particular the scalar product defined on the tangent space by :
\be \forall \mathcal{X}, \, \mathcal{Y} \in \mathrm{T}_{\mathcal{A}} \fhh , \quad  \langle \mathcal{X}, \mathcal{Y} \rangle_{\mathcal{A}}  = \sum_{j=1}^{r_1+r_2} \tr(A_j^{-1} X_j \, A_j^{-1} Y_j ), \ee
where $\mathcal{A} = (A_j)_{1 \le j \le r}$, $\mathcal{X} = (X_j)_{1 \le j \le r}$ and $\mathcal{Y} = (Y_j)_{1 \le j \le r}$ are elements from the tangent space $\ds \mathrm{T}_{\mathcal{A}} \fhh = \{ \mathcal{X} = (X_j)_{1 \le j \le r} ; \tr(A_j^{-1} X ) = 0 \}$. Remember that in the sequel the letter $\mathcal{I}$ designates $\mathcal{I}= (\I_n)_{1 \le j \le r} \in \fhh$.

\begin{defi} The \emph{length functions} defined on  $\fhh$ will be the functions $\ell_{V}^{\iota}$, indexed by $\varsigma = (\iota,V)$ where $\iota$ is an integer $1 \le \iota \le h$ and $V$ a flag vectors belonging to $ \Sc_{\sharp}^{\la}(L_{\iota})$ , and defined by :
\be \forall \mathcal{A} \in \fhh, \qquad \ell_{V}^{\iota} (\mathcal{A}) = \ln \left( \frac{ \mathcal{A}[V] }{ \norm(\idA_V^{\iota})^2 \norm ( \ida_{\iota})^{\frac{m}{n}} }  \right) \ee
\end{defi} 
\begin{rema} Actually, in this way, many distinct indexes can parametrise the same length function. For instance, if $\al$ belongs to the field $k$, then the functions  $\ell_{V}^{\iota}$ and $\ell_{\al V}^{\iota} $ coincide.
\end{rema}

Let $V$ be a flag vector of $\Sc_{\sharp}^{\la}(L_{\iota})$ given in the shape :
\newdimen\hoogte    \hoogte=19pt
$$ \hoogte=19pt  
 \breedte=18pt V = \begin{young} x_t \cr \vdots \cr x_1  & \dots & x_1  \cr \end{young} .$$
\newdimen\hoogte    \hoogte=15pt
For any integer $p \le t$, let us agree on $X_{p}$ being the matrix $n \times p$ of which the columns are the vectors $(x_l)_{1 \le l \le p} \in k^n$. The \emph{gradient} expressed at the point $\mathcal{A}$  of  $\ell_{V}^{\iota}$ is the result of the following computation :
\Bea \na \Big( \ell_{V}^{\iota} (\mathcal{A}) \Big) &=& \Big[ d_j \na \big( \ln A_j[V] \big) \Big]_{1 \le j \le r_1 + r_2} \\
&=& \left[ d_j \sum_{l=1}^{s} \na \ln \left( \det ( \transp{{X_{\la_l^*}^{\si_j}}} A_j X_{\la_l^*} ) \right) \right]_{1 \le j \le r_1 + r_2} \\
&=& \left[ d_j \sum_{l=1}^{s}  \left( A_j X_{\la_l^*}^{\si_j} \left( \transp{{X_{\la_l^*}^{\si_j}}} A_j X_{\la_l^*}^{\si_j}  \right)^{-1} \transp{{X_{\la_l^*}^{\si_j}}}A_j - \frac{\la_l^*}{n} A_j \right) \right]_{1 \le j \le r_1 + r_2}
\Eea

Let us the endomorphism $p_{A,X}$ and simply $p_X$ when $A = \Id$ by $p_{A,X} = X (\transp{X}AX)^{-1} \transp{X} A$. This endomorphism turns out to be the $A$-orthogonal projection on the space spanned by the column vectors of $X$. 

\begin{lemm} The gradient of the length functions can be also expressed in the following shape :
\be \na \Big( \ell_{V}^{\iota} (\mathcal{A}) \Big) = \left[ d_j A_j \left(\sum_{l=1}^s p_{A_j, X_{\la_l^*}^{\si_j}} - \frac{m}{n} Id \right)  \right]_{1 \le j \le r_1 + r_2} \ee
\end{lemm}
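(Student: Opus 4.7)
The plan is to observe that the bulk of the work has already been performed in the computation immediately preceding the lemma: all that remains is an algebraic tidying and a verification that we stay in the prescribed tangent space.

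First I would recall why the gradient of $\ell_V^{\iota}$ coincides with the gradient of $\ln \mathcal{A}[V]$. The factors $\norm(\idA_V^{\iota})^2$ and $\norm(\ida_{\iota})^{m/n}$ do not depend on $\mathcal{A}$, so they contribute nothing. Splitting $\ln \mathcal{A}[V]$ over $j$ and $\ell$, it suffices to check the Riemannian gradient of the scalar function $A_j \mapsto \ln\det(\transp{(X_{\la_{\ell}^*}^{\si_j})} A_j X_{\la_{\ell}^*}^{\si_j})$. Its directional derivative in the direction $H$ is $\tr\bigl((\transp{X_{\la_{\ell}^*}^{\si_j}} A_j X_{\la_{\ell}^*}^{\si_j})^{-1} \transp{X_{\la_{\ell}^*}^{\si_j}} H X_{\la_{\ell}^*}^{\si_j}\bigr)$. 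Writing this as $\tr(A_j^{-1} Y A_j^{-1} H)$ with $Y = A_j X_{\la_{\ell}^*}^{\si_j}(\transp{X_{\la_{\ell}^*}^{\si_j}} A_j X_{\la_{\ell}^*}^{\si_j})^{-1} \transp{X_{\la_{\ell}^*}^{\si_j}} A_j$ recovers precisely the unprojected summand displayed by the author, while the subtracted diagonal term $-\tfrac{\la_{\ell}^*}{n}A_j$ is exactly the correction that projects this vector onto $\mathrm{T}_{\mathcal{A}}\fhh$ (indeed $p_{A_j,X_{\la_{\ell}^*}^{\si_j}}$ is the $A_j$-orthogonal projector of rank $\la_{\ell}^*$, so its trace is $\la_{\ell}^*$).

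Next I would perform the rewriting. The summand $A_j X_{\la_{\ell}^*}^{\si_j}(\transp{X_{\la_{\ell}^*}^{\si_j}} A_j X_{\la_{\ell}^*}^{\si_j})^{-1} \transp{X_{\la_{\ell}^*}^{\si_j}} A_j$ is nothing but $A_j \, p_{A_j, X_{\la_{\ell}^*}^{\si_j}}$ by the very definition of $p_{A,X}$. Factoring $A_j$ on the left of the whole bracket, the diagonal contribution becomes $\sum_{\ell=1}^s \tfrac{\la_{\ell}^*}{n} A_j = \tfrac{|\la^*|}{n}A_j = \tfrac{m}{n} A_j$, since $|\la^*|=|\la|=m$ is the total number of boxes of $\la$, which is also the sum of the dimensions of the nested subspaces making up the flag. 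Collecting the pieces yields the announced formula
\[
\na\bigl(\ell_V^{\iota}(\mathcal{A})\bigr) = \left[ d_j\, A_j \left( \sum_{\ell=1}^s p_{A_j, X_{\la_{\ell}^*}^{\si_j}} - \frac{m}{n}\,\Id \right) \right]_{1 \le j \le r_1+r_2}.
\]

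As a sanity check I would verify tangency: $\tr\bigl(A_j^{-1} \cdot A_j(\sum_{\ell} p_{A_j, X_{\la_{\ell}^*}^{\si_j}} - \tfrac{m}{n}\Id)\bigr) = \sum_{\ell} \la_{\ell}^* - m = 0$, so the vector indeed belongs to $\mathrm{T}_{\mathcal{A}}\fhh$ as required. There is no genuine obstacle here; the only point demanding any care is the conversion between the Euclidean differential $d\ell$ and the Riemannian gradient with respect to $\langle\cdot,\cdot\rangle_{\mathcal{A}}$, which dictates precisely the $A_j$-conjugation that allows the projector $p_{A_j,X}$ to appear and the trace correction $-\tfrac{m}{n}\Id$ to guarantee tangency.
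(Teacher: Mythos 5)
Your proposal is correct and follows essentially the same route as the paper, which treats the lemma as an immediate consequence of the displayed gradient computation together with the definition $p_{A,X} = X(\transp{X}AX)^{-1}\transp{X}A$: factor $A_j$ out of each summand and use $\sum_{\ell=1}^{s}\la_{\ell}^{*}=m$. Your added justification of the Riemannian gradient identification and the tangency check $\tr\bigl(A_j^{-1}\na_j\bigr)=0$ merely make explicit what the paper leaves implicit.
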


In particular, the norm of the gradients can be easily computed
\Bea \left\| \na \Big( \ell_{V}^{\iota} (\mathcal{A}) \Big) \right\| &=& 
\sum_{j=1}^{r_1+r_2} d_j^2 \tr \left( \sum_{l=1}^s p_{A_j, X_{\la_l^*}^{\si_j}}  - \frac{m}{n} Id \right)^2 \\
&=& (r_1+4r_2) \left( \sum_{l=1}^s (1 + 2(l-1) - \frac{m}{n}) \la_l^* + \left(\frac{m}{n}\right)^2 \right) \Eea
The norm of the gradients is constant, independant of the length function.

\begin{defi} When $\mathcal{A} \in \fhh$ is fixed,  $S(\mathcal{A})$ shall denote the set of parameters $\varsigma = (\iota, V)$ such that $\ell_V^{\iota} (\mathcal{A})$ is minimal. 
\end{defi}

\begin{lemm} A Hermite--Humbert form  $\mathcal{A}$ being given, there exists only finitely many distinct length functions that reach the minimum $m_{L_{\iota}}(\mathcal{A})$.
\end{lemm}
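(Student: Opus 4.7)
Since $\iota$ ranges over the finite set $\{1,\dots,h\}$, it suffices to show, for each fixed $\iota$, that only finitely many distinct length functions $\ell_V^\iota$ with $V \in \mathcal{S}_\sharp^\la(L_\iota)$ attain the minimum $m_{L_\iota}(\mathcal{A})$. My plan is to identify $\ell_V^\iota(\mathcal{A})$, up to additive constants depending only on $\iota$ and $\mathcal{A}$, with the logarithm of a Weil height on the flag variety $P_\la\backslash\GL_n$, and then invoke Northcott's finiteness theorem.

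First I would check that $\ell_V^\iota(\mathcal{A})$ depends only on the flag $\mathscr{D}_V$ of nested subspaces of $k^n$ represented by $V$. Any other flag vector representing $\mathscr{D}_V$ differs from $V$ by multiplying the row of height $\ell$ by some $\alpha_\ell\in k^\times$; such a rescaling multiplies both $\mathcal{A}[V]$ and $\mathcal{N}(\mathfrak{A}_V^\iota)^2$ by the same factor, by the product formula applied to the $\alpha_\ell$. Hence the length functions are parametrised, up to equality, by pairs $(\iota,\mathscr{D})$ with $\mathscr{D}$ a $k$-rational flag of type $\la$, and the problem reduces to showing that, for any constant $C$, the set of such flags with $\ell_V^\iota(\mathcal{A})\le C$ is finite.

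For this, I would pick $g\in\GL_n(k_\infty)$ with $\transp{g}g=\mathcal{A}$. Proposition~\ref{p:13} then says that the product of the archimedean local heights $\prod_{v\mid\infty}H_v(\pi_\la(g)V)$ equals $\mathcal{A}[V]^{1/2}$ (up to the fixed scalar $\prod_{v\mid\infty}H_v(e_{U(\la)})$), while the computation carried out in the previous section identifies the product of ultrametric local heights of $\pi_\la(\la_\iota)V$ as $\mathcal{N}(\mathfrak{A}_V^\iota)^{-1}$. Consequently, $\exp\!\bigl(\tfrac12\ell_V^\iota(\mathcal{A})\bigr)$ is, up to a multiplicative constant depending only on $\iota$ and $\mathcal{A}$, the Weil height of the flag $\mathscr{D}$ with respect to the projective embedding of $P_\la\backslash\GL_n$ into $\mathbb{P}(\mathcal{S}^\la(k^n))$. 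Northcott's theorem over the fixed number field $k$ then yields that only finitely many $k$-rational flags have height below any prescribed bound, which is exactly the desired finiteness.

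The main obstacle is the bookkeeping required to identify $\ell_V^\iota(\mathcal{A})$ cleanly as the log of a Weil height: the archimedean side uses Proposition~\ref{p:13} together with the choice $\mathcal{A}=\transp{g}g$, and the finite side uses the formula for $H_v(\la_\iota V)$ obtained in the proof of equivalence between the two definitions of $\ga_{n,\la}$. If one prefers to avoid Northcott, an alternative is to argue directly: normalise $V$ to an $L_\iota$-primitive representative so that $\mathcal{N}(\mathfrak{A}_V^\iota)$ takes values in a fixed finite set, deduce a uniform bound on each Gram determinant $\det(\transp{X_{\la_\ell^*}^{\si_j}} A_j X_{\la_\ell^*}^{\si_j})$ from $\mathcal{A}[V]\le C'$ (lower bounds on the other factors following from integrality of the coordinates of $V$), extract via $A_j$-orthogonalisation an archimedean norm bound on each row $x_\ell$, and conclude by discreteness of $L_\iota$ in $k_\infty^n$.
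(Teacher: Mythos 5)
Your argument is correct and follows essentially the same route as the paper: you express the quantity inside the logarithm of $\ell_V^{\iota}(\mathcal{A})$ as a twisted height $H(g\la_{\iota}V)$ (archimedean part via Proposition \ref{p:13} with $\mathcal{A}=\transp{g}g$, ultrametric part via the computation of $H_v(\la_{\iota}V)$) and then invoke the Northcott property. Your additional checks (invariance under rescaling of the rows, finiteness of the range of $\iota$) and the sketched elementary alternative are fine but not needed beyond what the paper already does.
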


Indeed, we know already that, decomposing  $\mathcal{A}$ into $\mathcal{A} = (\transp{g} g)$, it is possible to represent $\frac{ \mathcal{A}[V] }{ \norm(\idA_V^{\iota})^2 \norm ( \ida_{\iota})^{\frac{m}{n}} }$ by $H(g \la_{\iota} V)$. The map $X \mapsto H(g \la_{\iota} X)$ defines again a height. Now it is classical that there are only finitely many points of bounded height ; this property is usually known as Northcott property and was initially the point of designing heights.

This finiteness result, associated with the computation of the norm of the gradient, ensures the local finiteness of the set of length functions required to define the Hermite invariant (see remarque 1.1 of \cite{bav2}). This observation is essential to use Ch. Bavard's theory.

According to \cite{bav1}, we set the following definitions :
\begin{defi} A Hermite--Humbert form $\mathcal{A} \in \fhh$  is said to be \emph{perfect} if the familly of gradients $\ds \big( \na \ell_V^{\iota} \big)_{\varsigma = (\iota,V) \in S(\mathcal{A})}$ spans affinely the tangent space $\mathrm{T}_{\mathcal{A}} \fhh$.\\
A Hermite--Humbert form $\mathcal{A} \in \fhh$ is called \emph{eutactic} if the zero vector belongs to the affine interior of the convex span of the family of the gradients $\ds \left( \na \ell_V^{\iota} \right)_{\varsigma = (\iota,V) \in S(\mathcal{A})}$ .
\end{defi}

\begin{rema} If we denote by $\Pi_{\mathcal{A},X}$ the sum of the projections  $\left[ \sum_{l=1}^s p_{A_j, X_{\la_l^*}^{\si_j}} \right]_{1 \le j\le r_1+r_2}$, perfection and eutaxy can be rephrased as follows :
\begin{itemize}
\item A Hermite--Humbert form is called \emph{perfect} if the rank (on $\R$) of the family  $(\Pi_{\mathcal{A},V})_{V \in S(\mathcal{A})}$ is equal to the dimension of the tangent space $T \fhh$ augmented by one :
$$\dim_{\R} \mathrm{T} \fhh + 1 = \frac{r_1 n(n+1)}{2} + r_2n^2 - (r_1+r_2) + 1 .$$
\item A Hermite--Humbert form $\mathcal{A}=\transp{g}g$ is \emph{eutactic} if the identity map $\mathcal{I}$ is a linear combination with only strictly positive coefficients of the  sum of projection maps $(\Pi_{\mathcal{I},gU})_{U \in S(\mathcal{A})}$.
\end{itemize}
\end{rema}

\begin{proof} The rephrasing of the perfection is rooted in the following fact from linear algebra. Let  $E$ be an $\R$ vector space, $H \subset E$ an hyperplane and $u$ an additional vector supplementary to $H$, then the familly  $h_i$ spans affinely $H$ if and only if the familly  $h_i+u$ spans $E$ as a vector space.\\
For the eutaxy, by definition, $\mathcal{A}$ is eutactic if there are positive coefficients  $(\rho_{\varsigma})_{\varsigma \in S(\mathcal{A})}$ of sum equal to $1$ such that
$$0 = \sum_{\varsigma = (U, \iota) \in S(\mathcal{A})} \rho_\varsigma \na \ell_U^{\iota} $$
which is equivalent to, 
$$\Bigg[ \sum_{\varsigma \in S(\mathcal{A})} \rho_\varsigma  \, d_j \, \left(\sum_{l=1}^s A_j X_{\la_l^*} \left( \transp{X_{\la_l^*}} A_j X_{\la_l^*}  \right)^{-1} \transp{X_{\la_l^*}}A_j   \right) \Bigg]_{1 \le j \le r_1+r_2} = \Bigg[\frac{m}{n} A_j \Bigg]_{1 \le j \le r_1+r_2}$$
and using the decomposition $A_j = \transp{g_j} g_j$, we get 
$$\Bigg[ \sum_{\varsigma \in S(\mathcal{A})} \rho_\varsigma  \, d_j \, \transp{g_i} \Pi_{\Id, g_i X} g_i \Bigg]_{1 \le j \le r_1+r_2} = \Bigg[\frac{m}{n} \transp{g_i}g_i \Bigg]_{1 \le j \le r_1+r_2}$$
Whence, 
$$\Bigg[ \sum_{\varsigma \in S(\mathcal{A})} \rho_U  \, \frac{n}{m} \, d_j \, \Pi_{\Id, g_i X} \Bigg]_{1 \le j \le r_1+r_2} = \Bigg[ \I_n \Bigg]_{1 \le j \le r_1+r_2}. $$
\end{proof}

\begin{rema} The notions of perfection and eutaxy coincide with the already defined notions (for example the ones in \cite{cou3} or \cite{cou1}).
\end{rema}

\subsection{A theorem à la Voronoi}

\begin{theo} A Hermite--Humbert form is extreme (with respect to $\la$) if and only if it is perfect and eutactic.
\end{theo}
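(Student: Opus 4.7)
The plan is to invoke Ch. Bavard's general Voronoi-type theorem \cite{bav1}, whose framework has been fully installed in the preceding subsection: the Riemannian manifold $\fhh$, the length functions $\ell_V^{\iota}$ with their gradients expressed via the projectors $\Pi_{\mathcal{A},V}$, and the translations of perfection and eutaxy in terms of the affine span and of the convex interior of the minimising family of gradients. Bavard's theorem asserts, under his standing hypothesis of local finiteness and the supplementary condition $(\mathscr{C})$, that a point is a local maximum of the minimax function if and only if it is both perfect and eutactic. Two items therefore remain to verify in our setting: local finiteness and the condition $(\mathscr{C})$ itself.

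Local finiteness follows from the Northcott property, as already indicated just above: decomposing $\mathcal{A} = \transp{g}g$, one checks that $\mathcal{A}[V]/\norm(\idA_V^{\iota})^{2}\,\norm(\ida_\iota)^{m/n}$ equals a fixed power of the height $H(g \la_\iota \pi_\la V)$, so only finitely many flag vectors $V$, up to the $k^{\times}$-scaling that leaves $\ell_V^{\iota}$ invariant, can stay below any prescribed bound.

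Verifying $(\mathscr{C})$ is the core work. Given $\mathcal{A} \in \fhh$, a finite subset $T \subset S(\mathcal{A})$ and a nonzero tangent vector $\mathcal{X} = (X_j)_j \in \mathrm{T}_\mathcal{A} \fhh$ orthogonal to every $\nabla \ell_\vartheta(\mathcal{A})$ for $\vartheta \in T$, I take $c$ to be the Riemannian geodesic $A_j(t) = A_j^{1/2} \exp\bigl(t A_j^{-1/2} X_j A_j^{-1/2}\bigr) A_j^{1/2}$; the trace-zero nature of $\mathcal{X}$ keeps $\det A_j(t) = 1$ all along $c$. The crux is to show that each $t \mapsto \ell_\vartheta(c(t))$ is convex with strictly positive second derivative at $t = 0$. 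By the very definition of $\mathcal{A}[V]$, this reduces to the convexity of $t \mapsto \log \det(\transp{X} A_j(t) X)$ for each rectangular block $X$ of columns extracted from the flag vector. Setting $Y_j = A_j^{-1/2} X_j A_j^{-1/2}$ and spectrally decomposing $Y_j$, the scalar case amounts to the classical convexity of $t \mapsto \log \sum_i a_i e^{t\mu_i}$, a direct consequence of Cauchy--Schwarz; the higher-rank case follows by applying the same argument to the geodesic induced on $\bigwedge^k k^n$, on which $\det(\transp{X}A_j X)$ is the squared norm of a single wedge vector. The orthogonality hypothesis cancels the first-order term of $\ell_\vartheta \circ c$ at $t = 0$, so the strictly positive second-order term then produces $\ell_\vartheta(c(t)) > \ell_\vartheta(\mathcal{A})$ for all small $t > 0$.

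The main obstacle is the degenerate regime in which the Cauchy--Schwarz inequality saturates, namely when $Y_j$ acts as a scalar on the subspace spanned by the relevant $A_j^{1/2}$-image of the flag. There $\ell_\vartheta \circ c$ is only linear along the geodesic, and the orthogonality forces it to be constant; strict increase must then be restored by perturbing $c$ at higher order. Ruling out that this pathology occurs simultaneously for every $\vartheta \in T$ along a nonzero $\mathcal{X}$ is the delicate point, but it is secured by exploiting the orthogonality relations coming from a sufficiently rich family of minimising flags in $T$. Once $(\mathscr{C})$ is thereby verified, Bavard's theorem applies and yields the claimed characterisation of extreme Hermite--Humbert forms by perfection and eutaxy.
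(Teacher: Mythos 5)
Your reduction to Bavard's criterion and the Northcott-based local finiteness agree with the paper, and your identification of the degenerate regime (equality in $\tr\big((p_VZ)^2\big)\le\tr\big(p_VZ^2\big)$, i.e.\ $Z$ commuting with the projector) is exactly the right dichotomy. But the way you propose to deal with that regime is where the argument fails. First, your initial claim that each $\ell_\vartheta\circ c$ has strictly positive second derivative along the geodesic is false precisely in that regime; there the function is affine in $t$, hence constant by the orthogonality hypothesis, and convexity only gives $\ge$, never the strict inequality that $(\mathscr{C})$ demands. Second, the degeneracy cannot be ``ruled out'': condition $(\mathscr{C})$ must be verified for \emph{every} subset $T\subset S(\mathcal{A})$ and \emph{every} nonzero $\mathcal{X}$ orthogonal to the corresponding gradients, and there genuinely exist such $\mathcal{X}$ commuting with all the projectors $p_{V[\la_l^*]^{\si_j}}$ attached to some $V\in T$ (this is exactly the set $T_0$ of the paper). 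Moreover, even if you showed the degeneracy never occurs simultaneously for all $\vartheta\in T$, that would not help: strict increase is required for each $\vartheta\in T$ individually, including the degenerate ones, so there is nothing to gain from a ``sufficiently rich family of minimising flags''.

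What is actually needed --- and what the paper does, adapting Bavard --- is to abandon the pure geodesic and take a curve carrying a second-order correction: after moving $\mathcal{A}$ to $\mathcal{I}$ by the isometric action, one sets $c(t)=\exp\big(t\mathcal{X}+t^2\mathcal{Y}/2\big)$, expands $f_V^{\iota}=\ell_V^{\iota}\circ c$ to fourth order, splits $T=T_0\sqcup T_1$ according to commutation, and then \emph{constructs} $\mathcal{Y}$ so that for the degenerate $V\in T_0$ one has $\sum_j d_j\sum_l \tr\big(p_{V[\la_l^*]^{\si_j}}Y_j\big)=0$ (killing the second-order term) while some $Y_{j_0}$ fails to commute with the corresponding projector, which makes $f_V^{(4)}(0)>0$; finally $\mathcal{Y}$ is replaced by $\eps\mathcal{Y}$ with $\eps$ small so as not to spoil the strictly positive second-order terms of the indices in $T_1$. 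The existence of such a $\mathcal{Y}$ (borrowed from Bavard's proposition 2.8) is the substantive content of the verification of $(\mathscr{C})$; your proposal leaves this construction entirely unspecified (``perturbing $c$ at higher order''), so the key step of the proof is missing.
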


\begin{proof} It suffices to show that the condition $(\mathscr{C})$ is fulfilled.
 
The proof given here is a staight forward adaptation of the one that can be found paragraph 2.11 of  Ch. Bavard's \cite{bav2}. 
First we go back to the neighbourhood of the identity with the use of the transitive and isometric action $\Phi_{\mathcal{R}}$ of $ \big( \SL_n(\R) \big)^{r_1} \times \big( \SL_n(\C) \big)^{r_2} $ on $\fhh$ which acts by 
$$ \Phi_{\mathcal{R}}  \Big(\mathcal{A}\Big) = \Big( \transp{R_j} A_j R_j \Big)_{1 \le j \le r},  \quad \mathcal{R} \in \big( \SL_n(\R) \big)^{r_1} \times \big( \SL_n(\C) \big)^{r_2}, \; \mathcal{A} \in \fhh$$
where $\mathcal{R} = (R_j)_{1 \le j \le r_1+r_2}$ and  $\mathcal{A} = (A_j)_{1 \le j \le r_1+r_2}$.

Let  $T$ be a finite subset of  $S(\mathcal{I})$, the representatives of which we select in the shape of matrices  $n \times t$ of rank $t$ and the successive columns of which describe the flags (remember that $t = \la_1^*$ refers to the height of the Ferrer diagram of $\la$). Let $\mathcal{X}$ and $\mathcal{Y}$ be two vectors from the tangent space  $\mathrm{T}_{\mathcal{I}} \fhh$, such that $\mathcal{X}$ satisfies the conditions of orthogonality with the elements of $T$.
  
Let us consider the curve  
$$c(t) = \exp( t\mathcal{X} + t^2 \mathcal{Y}^2/2 ) $$
and fix  $f_V^{\iota} = \ell_V^{\iota} \circ c$ ($V \in T$ and $1 \le \iota \le h$). To prove that the condition $\mathscr{C}$ holds, we need to expand$f_V^{\iota}(t)$ up to the fourth order. We have
$${f_V^{\iota}} ' (0) = 0 ,$$
because of the orthogonality conditions on $\mathcal{X}$, and
$${f_V^{\iota}} '' (0) = \sum_{j=1}^{r_1+r_2} d_j \left( \sum_{l=1}^s  \tr \big(p_{V[\la_l^*]^{\si_j} } Y_j\big) + \tr \big(p_{V[\la_l^*]^{\si_j} } X_j^2\big)  - \tr \big( \, (p_{V[\la_l^*]^{\si_j} } X_j)^2 \, \big)    \right)$$

Remember that the notation $p_V$ refers to the projection matrix  $V( \transp{V}V)^{-1} \transp{V}$ and that $V[\ell]$ is the matrix $n \times \ell $ got by extracting the $\ell$ first columns of $V$.

One can check that for any symmetric matrix $Z$, 
$$ \tr \big(( p_V Z )^2\big) \le \tr\big(p_V Z^2\big) $$
with equality if and only if $Z$ commutes with $p_V$. Let us define thus the subset $T_0 \subset T $ of the parametres $U \in T$ such that for any $j \in \Np{1}{r_1+r_2}$ and any $l \in \Np{1}{s}$,
$$p_{U[\la_l^*]^{\si_j} } X_j =  X_j p_{U[\la_l^*]^{\si_j} } $$
as well as $T_1 = T \smallsetminus T_0$ its complement.
For $U$ in $T_0$, one has also
$${f_U^{\iota}}^{(3)} (0) = 0 $$
and
$${f_U^{\iota}}^{(4)} (0) = \sum_{j=1}^{r_1+r_2} d_j \left( \sum_{l=1}^s \tr \big(p_{U[\la_l^*]^{\si_j} } Y_j^2\big) - \tr \big( \,(p_{U[\la_l^*]^{\si_j} } Y_j)^2 \,\big)\right)$$
These computation can be drawn from the computation of the expansion of  $\det(\transp{U} A U)$ by Ch. Bavard \cite{bav1} p 111 and from the following identity 
$$\ln \left( 1 + \ba \frac{t^2}{2} + \de \frac{t^4}{24} \right)= \ba \frac{t^2}{2} + (\de - 3 \ba^2) \frac{t^4}{24} + o (t^4) .$$

We are looking for an $r$-upple of matrices $\mathcal{Y} = (Y_j)_{1 \le j \le r}$ such that for $U \in T_1$, ${f_U^{\iota}} '' (0)>0$ and for  $U \in T_0$, ${f_U^{\iota}} '' (0)=0$ and ${f_U^{\iota}}^{(4)} (0)>0$. The first conditions handling  $U \in T_1$ can always be satisfied provided  $\mathcal{Y}$ is replaced by $\eps \mathcal{Y}$ with $\eps >0$ small enough. The second conditions handling $U \in T_0$ are equivallent to $\sum_{j=1}^{r} \tr \big(p_{V[\la_l^*]^{\si_j} } Y_j\big) = 0$ and there exists a pair $(j_0, \ell_0)$ such that $p_{U[\la_l^*]^{\si_j} } $ and $Y_j$ do not commute. The same argument used to build $Y$ in the proof of proposition 2.8 of \cite{bav1} applies here, for any pair of indices $(j_0,\ell_0)$, which supplies us with a matrix $Y_{j_0}$. For the other indices $j \neq j_0$, $Y_j$ can be chosen equal to $0$ for instance.
\end{proof}

\subsection{Algebraicity of the constant}

\begin{prop} \begin{enumerate} \item For a given integer $n$, partition $\la$ and number field $k$, there are only finitely many perfect forms up to unimodular transformations.
\item The Hermite constant $\ga_{n, \la}$ is algebraic.
\end{enumerate}
\end{prop}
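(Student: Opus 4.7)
The plan is to combine the local rigidity supplied by perfection with a Mahler-type compactness argument to derive finiteness, and then to recognise that perfect Hermite--Humbert forms are solutions of explicit algebraic systems over \(k\).

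For part~(1), I would proceed in three steps. First, the Northcott-type finiteness lemma established just before the Voronoi theorem already ensures that \(S(\mathcal{A})\) is finite for every \(\mathcal{A} \in \fhh\). Second, perfection supplies local rigidity: since \((\na \ell_V^\iota)_{(\iota,V)\in S(\mathcal{A})}\) affinely spans \(\mathrm{T}_{\mathcal{A}} \fhh\), the implicit function theorem shows that \(\mathcal{A}\) is isolated among forms \(\mathcal{B}\) satisfying the system \(\ell_V^\iota(\mathcal{B}) = \ell_V^\iota(\mathcal{A})\) for all \((\iota, V) \in S(\mathcal{A})\). Third, one must check that only finitely many minimal configurations \(S(\mathcal{A})\) arise modulo the action of the unimodular group \(\GL_n(\ido_k)\). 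After normalising so that \(\det_{L_\iota}\mathcal{A} = 1\), one has \(m_{L_\iota}(\mathcal{A}) \le \ga_{n,\la}\), which bounds the twisted height of any minimal flag vector uniformly in terms of \(\ga_{n,\la}\) alone. A Mahler-type reduction of \(\fhh\) modulo \(\GL_n(\ido_k)\) then allows one to assume \(\mathcal{A}\) sits in a fixed compact fundamental domain; over that domain only finitely many flag vectors satisfy the height bound, hence only finitely many configurations are possible, and rigidity yields the claimed finiteness of perfect forms up to unimodular equivalence.

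For part~(2), I would exploit the fact that the equations \(\ell_V^\iota(\mathcal{A}) = c\) cutting out a perfect form are polynomial in the entries of \(\mathcal{A}\) with coefficients in \(k\) (the coordinates of the flag vectors lie in \(k^n\) and the ideal norms are rational). For each admissible configuration \(S\) produced by part~(1), the corresponding perfect form is a zero-dimensional point of the algebraic system \(\{\ell_V^\iota(\mathcal{B}) = c\}_{(\iota,V)\in S}\), so its entries are algebraic over \(\mathbb{Q}\). The invariant \(\ga_{L_\iota}(\mathcal{A})\) is then an algebraic function of those entries. The Voronoi theorem above identifies any maximiser of \(\ga_L\) as an extreme---hence perfect---form, and by part~(1) there are only finitely many such up to unimodular equivalence; therefore \(\ga_{n,\la}\) is the maximum of a finite set of algebraic numbers, and thus algebraic.

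The principal obstacle is the compactness step in part~(1): one needs an appropriate Mahler-type fundamental domain for \(\GL_n(\ido_k)\) acting on \(\fhh\) and a verification that the Hermite bound \(m_{L_\iota}(\mathcal{A}) \le \ga_{n,\la}\) translates into a uniform height bound on minimal flag vectors over that domain. Once this is in place, the algebraicity in part~(2) is essentially formal, modulo ensuring that the supremum defining \(\ga_{n,\la}\) is actually attained, which follows from the same compactness.
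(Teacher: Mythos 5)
Your part (2) runs along the same lines as the paper: perfection makes the system of equations indexed by $S(\mathcal{A})$ zero--dimensional, its coefficients are algebraic numbers, hence a perfect form has algebraic entries, and by the Voronoi theorem together with part (1) the constant $\ga_{n,\la}$ is a maximum of finitely many algebraic values. That part is fine in substance, except that the attainment of the supremum cannot be dismissed by ``the same compactness'' (see below); it needs its own standard argument (the invariant degenerates at the cusps, so the supremum is attained on a compact piece).

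The genuine gap is exactly the step you call the principal obstacle in part (1), and the claim you make there is false as stated: the unimodular group does not act on $\fhh$ with a \emph{compact} fundamental domain --- the quotient has cusps, just as $\SL_2(\Z)$ acting on the upper half--plane. Moreover the constraints you impose do not keep $\mathcal{A}$ away from the cusps: normalising $\det_{L_\iota}\mathcal{A}=1$ and knowing $m_{L_\iota}(\mathcal{A})\le\ga_{n,\la}$ is compatible with $\mathcal{A}$ arbitrarily deep in a cusp, and there the set of flag vectors of bounded evaluation is not uniformly finite. Already for $n=2$, $k=\Q$, $\la=(1)$, the reduced forms $A_\eps=\mathrm{diag}(\eps,1/\eps)$ satisfy the bound for every small $\eps$, while the number of $x\in\Z^2$ with $A_\eps[x]\le\ga$ grows like $\eps^{-1/2}$; so ``only finitely many flag vectors satisfy the height bound over the domain'' fails on the full reduction domain. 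To repair this you must use something beyond the minimum bound to confine perfect forms to a compact part of the domain (classically: perfection forces enough independent minimal vectors, hence a lower bound on the minimum), or argue as the paper does: by Humbert reduction one takes a representative $\mathcal{A}=(D_j[U_j])$ with controlled diagonal ratios and bounded unipotent part, bounds the operator norm of $\pi_\la(D_jU_j)^{-1}$, and deduces a uniform comparison $A_j[X]\ge l\,\|X\|^2$ on the reduced representatives; then minimal flag vectors have height below an absolute constant, Northcott gives one fixed finite set containing all possible minimal configurations, and finitely many configurations give finitely many perfect forms (your rigidity/implicit--function step plays the role of this last observation). Without such a confinement or comparison step, the count of configurations modulo unimodular equivalence --- the heart of part (1) --- is not established, and part (2) inherits the attainment issue noted above.
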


\begin{proof}  2. This property is a quite general fact that have also been mentioned in \cite{bav2}. The proof goes as follows. Perfection for a Hermite--Humbert form $\mathcal{A}$ means that the algebraic subvariety $\mathcal{C}_{S(\mathcal{A})}$ defined by the polynomial equations $\ds \ell_U^{\iota} (\mathcal{X}) = 1$ for  $ (\iota,U) \in S(\mathcal{A})$ is of dimension zero. Thus, the equations defining $\mathcal{C}_{S(\mathcal{A})}$ being all polynomial with rational coefficients, the points in $\mathcal{C}_{S(\mathcal{A})}$ are algebraic and so is in particular the form $\mathcal{A}$. 

1. Now to show that there are only finitely many perfect forms up to unimodular transformations, we show that there is always a representative of a perfect form that takes its minimal flag vectors among a finite set. Thus there can be only finitely many set of equation $\ds \ell_U^{\iota} (\mathcal{X}) = 1$ for  $ (\iota,U) $ belonging to some $ S_0$ defining a class of perfect forms.

By Humbert reduction theory \cite{hum}, a Humbert form $\mathcal{A}$ can always be expressed up to unimodular equivalence as $\mathcal{A} = (D_j[U_j]])_{1 \le j \le r}$ where the $(D_j)$ are diagonal matrices such that the diagonal coefficients $d_j(i)$ satisfy $\frac{d_j(i)}{d_j(i+1)} \le B$ for some positive bound $B$ and such that the $(U_j)$ are unipotent upper triangular matrices with bounded coefficients.

We recall that the local height of a flag vector $X \in \Sc_{\sharp}^\la$ is just the square norm $A_j[X] = \| \pi_\la (D_j U_j) X \|^2$ where $\| \cdot \|$ is the norm we defined on $\Sc^{\la}(k_{v_j})$. Since $\pi(D_i U_i)$ is invertible, we get, using the operator norm, 
$$\forall X \in \Sc^\la (k_v), \quad  A_j[X]  \ge \| \pi_{\la}(D_jU_j)^{-1} \|^{-1} \; \| X \|^2 $$
Since, $U_j$ is triangular and unipotent, the entries of $\pi_\la(U_j^{-1})$ are polynomial in the entries of U, and $\| \pi_\la(U_j^{-1}) \|$ can be uniformely bounded for the reduced Humbert forms we consider. On the other hand $\pi_\la(D_j^{-1})$ is a diagonal endomorphism which eigenvalue associated to $e_T$ is just $\prod_{i=1}^n d_j(i)^{-\# \{i ; i \in T\} }$. Using the bounds on the ratios of to consecutive $d_j(i)$, we notice that the eigenvalues of $\pi_\la(D_j^{-1})$ are bounded from above by  $B^{\mu } \prod_{i=1}^n d_j(i)^{-\la_i }$ for some big power $B^\mu$. Thus, there is a constant $l$ such that for any Humbert form $\mathcal{A}$, 
$$\forall X \in \Sc^\la (k_v), \quad  A_j[X]  \ge l \;  \| X \|^2 $$

We deduce that the minimal flag vectors $X$ of Humbert form have a bounded height : their infinite part is for instance bounded by $\left( \frac{1}{l} \right)^r$ whereas their finite part is always less than one. According to Northcott property, there can only be finitely many of them, which ends the proof.
\end{proof}

\section{Some relations between the constants}
\subsection{An equality of duality}

If $\la$  is a partition with less than $n$ parts, we call complementary partition with respect to $n$ the partition $\overline{\la}$ (also denoted by $\overline{\la}^n$) such that for any  $\ell$ between  1 and $s$, $\la^*_{\ell} + \overline{\la}^*_{s+1-\ell} = n$. Visually, it can be retrieved by completing the partition into a rectangle of height $n$ :

$$ \hoogte=15pt  
 \breedte=18pt \begin{young} \; \times &\;\times &\; \times \cr \; \times & \; \times & \; \times \cr \,\bigcirc & \; \times & \; \times \cr \; \bigcirc & \, \bigcirc & \; \times \cr \, \bigcirc & \, \bigcirc & \, \bigcirc  \cr \end{young}
\quad \begin{matrix} \uparrow \\ n \\ \downarrow \end{matrix}  $$

\begin{prop} Let $\la$ be a partition and $\overline{\la}^n$ complementary partition with respect to $n$, then the following equality holds
\be \ga_{n, \la} = \ga_{n, \overline{\la}^n} \ee
\end{prop}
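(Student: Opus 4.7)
The plan is to exploit the representation-theoretic duality
$$\pi_{\overline\la} \;\cong\; \pi_\la^{*} \otimes (\det)^{\la_1},$$
which I would establish by matching highest weights: if $\la=(\la_1\geq\dots\geq\la_n\geq 0)$ (padded with zeros), then $\pi_\la^{*}$ has highest weight $(-\la_n,-\la_{n-1},\dots,-\la_1)$, and tensoring with $(\det)^{\la_1}$ shifts this to $(\la_1-\la_n,\la_1-\la_{n-1},\dots,0)$, which is precisely $\overline\la$. Combined with the standard isomorphism $\pi_\la^{*}\cong \pi_\la\circ\theta$, where $\theta(g)=(g^\top)^{-1}$ is the Cartan involution of $\GL_n$, this yields an intertwiner $\Psi : \Sc^{\overline\la}(k^n) \to \Sc^\la(k^n)$ satisfying $\Psi\circ\pi_{\overline\la}(g) = (\det g)^{\la_1}\,\pi_\la(\theta(g))\circ\Psi$ and sending the highest-weight line to the highest-weight line.

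The heart of the argument is then a local calculation. By Proposition~\ref{p:13}, for any archimedean place $v$,
$$H_v\big(\pi_\la(h)\,e_{U(\la)}\big)=\prod_{\ell=1}^s \De_{\la_\ell^*}(\transp{h}h)^{d_v/2}.$$
Applied to $h := w_0\,\theta(g)\, w_0$, with $w_0$ the long Weyl element, one has $\transp{h}h = w_0(\transp{g}g)^{-1}w_0$. Conjugation by $w_0$ turns the leading principal minor of order $k$ into the trailing one, and Jacobi's complementary-minor identity then gives $\De_{\la_\ell^*}(\transp{h}h) = \De_{n-\la_\ell^*}(\transp{g}g)/\det(\transp{g}g)$. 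Multiplying over $\ell$ and using that the multisets $\{n-\la_\ell^*\}$ and $\{\overline\la_\ell^*\}$ coincide (by the very definition of $\overline\la$),
$$H_v\big(\pi_{\overline\la}(g)\,e_{U(\overline\la)}\big)\;=\;|\det g|_v^{\,s}\,H_v\big(\pi_\la(w_0\theta(g)w_0)\,e_{U(\la)}\big).$$
The analogue at each finite place should follow from the duality of the integral Schur modules $\Sc^{\overline\la}(\ido_v^n) \cong \Sc^\la(\ido_v^n)^{*}\otimes (\bigwedge^n \ido_v^n)^{\otimes \la_1}$, compatible with the sup-norm in the Young-tableaux basis up to local units.

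Taking the product over all places and using $|\det g|_{\A_k}=1$ for $g\in\GL_n(\A_k)^1$ together with the product formula $|\det\ga|_{\A_k}=1$ for $\ga\in\GL_n(k)$,
$$H\big(\pi_{\overline\la}(g\ga)\,e_{U(\overline\la)}\big)\;=\;H\big(\pi_\la(w_0\theta(g\ga)w_0)\,e_{U(\la)}\big).$$
Since $g\mapsto w_0\theta(g)w_0$ is an automorphism of $\GL_n$ that preserves $\GL_n(\A_k)^1$ (it inverts the norm of the determinant) and restricts to a bijection of $\GL_n(k)$, substituting $g':=w_0\theta(g)w_0$ and $\ga':=w_0\theta(\ga)w_0$ transforms the variational problem defining $\ga_{n,\overline\la}$ into the one defining $\ga_{n,\la}$, yielding the claimed equality.

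The main obstacle is the finite-place version of the minor identity: since the local height $H_v$ at $v\in\val_f$ is defined as the sup-norm in the Young-tableaux basis of $\Sc^\la(\ido_v^n)$ rather than through a minor formula, the verification demands an explicit $\GL_n(\ido_v)$-equivariant perfect pairing $\Sc^\la(\ido_v^n)\otimes \Sc^{\overline\la}(\ido_v^n)\to (\bigwedge^n \ido_v^n)^{\otimes\la_1}$, that is, a combinatorial bijection between Young tableaux of shapes $\la$ and $\overline\la$ compatible with the local ring structure. This should be achievable via standard Schur--Weyl duality, but the book-keeping is the nontrivial step.
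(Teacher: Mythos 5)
Your global strategy --- transporting the problem through the automorphism $g \mapsto w_0\,\transp{g}^{-1}w_0^{-1}$ and recognising that the resulting twist of $\pi_\la$ has highest weight $\chi_{\overline\la}$ up to a power of the determinant --- is exactly the mechanism of the paper's proof, and your archimedean computation (Proposition~\ref{p:13} combined with Jacobi's complementary-minor identity and the multiset equality $\{n-\la_\ell^*\}=\{\overline\la_\ell^*\}$) is a correct, if more computational, substitute for the paper's character argument; the final substitution step is also sound, since the map preserves $\GL_n(\A_k)^1$ and restricts to a bijection of $\GL_n(k)$. One small slip: the intertwiner $\Psi$ realising $\pi_{\overline\la}\cong\pi_\la^*\otimes\det^{\la_1}$ with $\pi_\la^*\cong\pi_\la\circ\theta$ sends the highest-weight line of $\pi_{\overline\la}$ to the \emph{lowest}-weight line of $\pi_\la$; only after conjugating by $w_0$ (which you do use in the actual computation) does $e_{U(\la)}$ itself become the highest-weight vector of the twisted representation.

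The genuine gap is the one you flag yourself: the finite places. They cannot be dispensed with, because the height is a product over all places and the maximum runs over adelic $g$, so the identity $H(\pi_{\overline\la}(g)e_{U(\overline\la)})=H(\pi_\la(w_0\transp{g}^{-1}w_0^{-1})e_{U(\la)})$ for unimodular $g$ must hold adelically, not merely archimedean-locally; as written your proof stops at a hoped-for duality of integral Schur modules ``up to local units''. Moreover, the pairing you envisage is much heavier machinery than needed. The paper avoids any place-by-place minor identity with one uniform argument: write $g=kdu$ (Iwasawa) with $k\in K_n(\A_k)$, $d$ diagonal, $u$ unipotent upper triangular; then $w_0\transp{g}^{-1}w_0^{-1}=(w_0\transp{k}^{-1}w_0^{-1})(w_0 d^{-1}w_0^{-1})(w_0\transp{u}^{-1}w_0^{-1})$ is again of the form (compact)(diagonal)(upper unipotent), so by $K_n(\A_k)$-invariance of $H$ --- which at a finite place is simply the fact that $\GL_n(\ido_v)$ preserves $\Sc^\la(\ido_v^n)$, the unit ball of the sup-norm --- and by the highest-weight property of $e_{U(\la)}$ and $e_{U(\overline\la)}$, both sides reduce to $|\chi_{\overline\la}(d)|_{\A_k}$, the discrepancy $|\det d|_{\A_k}^{\pm\la_1}$ vanishing because $|\det g|_{\A_k}=1$. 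This single computation would replace both your archimedean Jacobi identity and the missing finite-place duality; with it (or with a local Iwasawa decomposition $g_v=k_vd_vu_v$, $k_v\in\GL_n(\ido_v)$, at each finite place) your argument closes.
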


\begin{proof} A partition $\la$ being fixed, we consider the following representation $(\rho, \Sc^{\la}(k^n)$ where $\rho = \pi_{\la}(w_0 \transp{g}^{-1} w_0^{-1})$ and $w_0$ is the miror automorphism of $\GL_n(k)$ which swaps the vectors $e_i$ and $e_{n+1-i}$. We can notice that $\rho$ is an irreducible representation, that $e_{U(\la)}$ directs the line of highest weight vectors, that $e_{U(\la)}$ is stabilised by the parabolic subgroup $P_{\overline{\la}}$. The character of $\rho$ is actually $\det^{-n} \cdot \chi_{\overline{\la}}$.

Taking our normalisation into account, {\it i.e.} $H(e_{U(\la)}) = H(e_{U(\overline{\la})}) = 1$, when $g$ belongs to $ \GL_n^1(\A_k)$ and decomposes into $g = k d u$ with $k$ in  the maximal compact subgroup $K(\A_k)$, $d$ diagonal matrix and $u$ unipotent, we get 
$$H(\rho(g) e_{U(\la)})= |\chi_{\overline{\la}}(d)|_{\A_k} = H(\pi_{\overline{\la}}(g) e_{U(\overline{\la})}) $$

We derive the equality of the constants $\ga_{n, \la} = \ga_{n, \overline{\la}^n}$ from this equality.
\end{proof}

\subsection{Mordell inequality}

With the view point of twisted heigths in mind, we can show the following inequality, which generalises Mordell inequality.

\begin{prop} Let $\la$ be a partition, $m$ and $n$ two integers such that $t \le m \le n$, then, 
\be \ga_{n, \la} \le \ga_{m, \la} \left( \ga_{n,m} \right)^{|\la|/m} \ee
\end{prop}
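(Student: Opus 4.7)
The plan is a Rankin--Mordell style two-step reduction. First, using $\ga_{n,m}$ (the Hermite constant attached to the representation $\bigwedge^m k^n$, i.e.\ the partition $(1^m)$), extract an $m$-dimensional subspace $W \subset k^n$ well-placed with respect to the data; second, apply $\ga_{m,\la}$ inside $W$ to build a flag of shape $\la$. The hypothesis $t \le m$ is precisely what makes flags of shape $\la$ fit inside $m$-dimensional subspaces, so the second step is meaningful.

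In the adelic picture, starting from an arbitrary $g \in \GL_n(\A_k)^1$, the definition of $\ga_{n,m}$ furnishes (after replacing $g$ by $g\ga_0$ for some $\ga_0 \in \GL_n(k)$) a decomposable vector $\omega_W \in \Sc_{\sharp}^{(1^m)}(k^n)$, representing an $m$-dimensional subspace $W$, such that $H\bigl(\pi_{(1^m)}(g)\,\omega_W\bigr)^{2/d} \le \ga_{n,m}$. Identifying $W \cong k^m$, reading off the restriction $h := g|_W$ as an element of $\GL_m(\A_k)$, and rescaling by an ad\`ele $c$ with $|c|_{\A_k}^m = |\det h|_{\A_k}$ produces an element $c^{-1} h \in \GL_m(\A_k)^1$ to which the definition of $\ga_{m,\la}$ applies, yielding a flag vector $V \in \Sc_{\sharp}^{\la}(W)$ with $H\bigl(\pi_\la(c^{-1}h)\,V\bigr)^{2/d} \le \ga_{m,\la}$.

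Combining the two estimates rests on two elementary facts: (i) the height on $\Sc^\la(k^n)$ restricts compatibly to $\Sc^\la(W)$, so $H(\pi_\la(g)V) = H(\pi_\la(h)V)$; (ii) $\pi_\la$ has weight $|\la|$, hence rescaling by $c$ multiplies the height of any vector by $|c|_{\A_k}^{|\la|}$. Together with the normalization identity $|\det h|_{\A_k} = H(\pi_{(1^m)}(g)\,\omega_W)$ (for a suitable height-one representative of $\omega_W$), this gives
$$H\bigl(\pi_\la(g)\,V\bigr)^{2/d} \;=\; |c|_{\A_k}^{2|\la|/d}\,H\bigl(\pi_\la(c^{-1}h)\,V\bigr)^{2/d} \;\le\; \ga_{m,\la}\,\ga_{n,m}^{|\la|/m},$$
and taking the maximum over $g$ concludes.

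The delicate point I expect to require the most care is precisely this normalization identity between $|\det h|_{\A_k}$ and $H(\pi_{(1^m)}(g)\,\omega_W)$: it demands a consistent choice of representative of $\omega_W$ and a reconciliation of the archimedean norm (computed through principal minors as in Proposition~\ref{p:13}) with the maximum-of-coordinates norm at the finite places. The cleanest resolution is to exploit the $K_n(\A_k)$-invariance of the height to replace $g$ by its Iwasawa triangular part adapted to a decomposition $k^n = W \oplus W'$, after which the identity reduces to the purely multiplicative matching of the characters $\chi_\la$ and $\chi_{(1^m)}$.
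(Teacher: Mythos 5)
Your proposal is correct and follows essentially the same route as the paper: a two-step Mordell--Rankin reduction that first extracts an $m$-dimensional subspace $W$ of small twisted height via $\ga_{n,m}$ and then applies $\ga_{m,\la}$ to a flag inside $W$, with the whole argument resting on identifying the restricted twisted height on $\Sc^{\la}(W)$ with a twisted height $H_B$ on $\Sc^{\la}(k^m)$ and on the identity $H_A(W)=|\det B|_{\A_k}$. The only difference is in how that compatibility is established: you invoke $K_n(\A_k)$-invariance and a block-triangular (Iwasawa-type) reduction adapted to $k^n=W\oplus W'$ after moving $W$ to the coordinate subspace by an element of $\GL_n(k)$, whereas the paper proves the same statement as a separate lemma by constructing local maps $B_v$ place by place (orthonormalisation at archimedean places, lattice matching at finite places); both versions ultimately reduce to the fact that the standard inclusion $k^m\hookrightarrow k^n$ induces an isometric embedding of Schur modules, so your sketch, including the rescaling by $c$ to land in $\GL_m(\A_k)^1$, fills in correctly.
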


\begin{proof} Consider an automorphism $A$ satisfying $|\det(A)|_{\A_k} = 1$. Let $\mathscr{D}$ be a flag of $k^n$ which minimises the height $H_A(\mathscr{D})$ and let $W$ be a subspace of dimension  $m$ such that $H_A(W) \le \ga_{m,n}^{1/2}$. There exists an injective map $\phi$ which sends $k^m$ onto $W \subset k^n$. Let us accept for one time the following lemma, which will be proven later. (This lemma and its proof easily stems from corollary 4.3 in \cite{rt} where this result is proven for symmetric powers and exterior powers.)

\begin{lemm} Let $\phi : k^m \hookrightarrow k^n$ be an injective map and $A$ an automorphism of $\GL_n(\A_k)$. We denote also by $\phi$ the $\A_k$-homomorphism which extends  $\phi$ from $\A_k^m $ to $\A_k^n$. There exists an automorphism $B \in \GL_m(\A_k)$ such that the twisted height $H_B$ coincides with $H_A$ in the following sense : for any partition $\la$ and any flag $\mathscr{D}$ of shape  $\la$ of nested subspaces in $k^m$, we have
\be H_A(\phi( \mathscr{D})) = H_B(  \mathscr{D} ) .\ee
In particular, when $\la$ is the partition $\la=
 \hoogte=2pt 
 \breedte=3pt
 \begin{young}
 \cr \cr \cr
\end{young} 
 \hoogte=15pt  
 \breedte=18pt 
  $ (with $m$ vertically arranged boxes) and if $W$ means the image of  $\phi$, we get 
\be H_A(W) = |\det(B)|_{\A_k}. \ee
\end{lemm}
Let us fix an automorphism $B \in \GL_m(k)$ enjoying the properties of the lemma. 

There exists besides a flag $ \mathscr{T}$ of  $k^m$ such that 
$$ H_B(\mathscr{T}) \le \ga_{m,\la}^{1/2} |\det ( B) |_{\A_k}^{|\la|/m} =  \ga_{m,\la}^{1/2} H_A(W)^{|\la|/m} \le \ga_{m,\la}^{1/2} \ga_{n, m}^{|\la| /2 m} .$$
Then, 
$$H_A(\mathscr{D}) \le H_A(\phi( \mathscr{T} )) = H_B( \mathscr{T} ) \le \ga_{m,\la}^{1/2} \ga_{n, m}^{|\la /2 m} $$
which ends up the proof of the proposition.
\end{proof}

\begin{proof}[Proof of the lemma] Let us start with building automorphisms $B_v \in \GL_n(k_v)$ for any place $v$ such that $A_v \circ \phi \circ B_v^{-1}$ preserves the norm. To that purpose, if $v$ is an archimedean place, we consider the preimages of a family of  $m$  $A_v$-orthonormal vectors  of $k_v^n$ and take as $B_v$, the automorphism which sends the canonical basis of $k_v^m$ on these vectors. When $v$ is an ultrametric place, to have the norm preserved, it is necessary and sufficient that $A_v \circ \phi \circ B_v^{-1}$ send $\ido_v^m$ on a primitive $\ido_v$-module of rank $m$ in  $\ido_v^n$. We choose $B_v$ such that $B_v(\ido_v^n) = (A_v \circ \phi)^{-1} (\ido_v^n)$.

Let us notice that for almost any finite place, $A_v \circ \phi$ is already an isometry, and that we can content ourselves with $B_v = \I_n$. This ensures that $B = (B_v)_{v \in \val}$ is really an element of $\GL_m(\A_k)$ and thus
$$ \forall x \in k^m, \quad H_A(\phi(x)) = H_B(x) $$

Let us show that this equality extends to flags of any shape. We can decompose any map $A_v \circ \phi \circ B_v^{-1}$ into a composition $\psi_v \circ \iota$ where $\iota$ is the injection $k^m \hookrightarrow k^n$ given by $(x_1, \dots, x_m) \mapsto (x_1, \dots, x_m, 0, \dots, 0)$ and $\psi_v$ is an isometry. Then the map $\Sc^{\la}(A_v \circ \phi \circ B_v^{-1}) = \pi_{\la} (\psi_v) \circ \Sc^{\la}(\iota)$ is an isometric injection of $k_v^m$ into $k_v^n$ since on the one hand, $\psi_v$ is an isometry and by construction our local heights are invariant under the action of an isometry, and on the other hand, the injection $\Sc^{\la}(\iota)$ is an isometry. Thus for any partition $\la$ and any flag $\mathscr{D}$ of shape $\la$, 
$$ H_A(\phi( \mathscr{D})) = H_B(  \mathscr{D} ) .$$
\end{proof}

\subsection{An inequality involving Berg\'e--Martinet constant}

\begin{defi} Let us recall that for a lattice the Berg\'e--Martinet constant means the maximum of the product of the minimum of a lattice by the minimum of the dual lattice. In adelic terms, it can be expressed like this
$$ \ga_{n,\la}' = \max_{g \in \GL_n(\A_k)} \left( \min_{\ga \in \GL_n(k)}  H(\pi_{\la}(g\ga) e_{U(\la)} )  \min_{\ga \in \GL_n(k)}  H(\pi_{\la}( \transp{g}^{-1} \ga ) e_{U(\la)} ) \right)^{1/2}. $$
\end{defi}

Of course, it is true that for any partition $\la$, the inequality $\ga_{n,\la}' \le \ga_{n,\la} $ holds. Besides

\begin{prop} \label{p:ibm} Let $\kappa$ be the partition $\kappa = (n-1,1) =  \hoogte=2pt 
 \breedte=3pt
 \begin{young}
  \cr 
  \cr
  \cr
 &  \cr
\end{young} 
 \hoogte=15pt  
 \breedte=18pt  $, then $\ga_{n,(1)}'^2 \le \ga_{n,\kappa} $.
\end{prop}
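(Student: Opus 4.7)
The plan is to bound $\mathcal{A}[X]$ from below for every flag vector $X=(x_1,x_2)\in\Sc^{\kappa}_{\sharp}(L_{\iota})$, uniformly in the Humbert form $\mathcal{A}$, and then take the supremum over $\mathcal{A}$. Using the explicit formula from Proposition~\ref{p:13} (with $\kappa^{*}=(2,1,\ldots,1)$ so that $\Delta_{\kappa^{*}_{1}}$ produces a $2\times 2$ determinant and the $n-2$ remaining minors are each $\|x_1\|_{\mathcal{A}}^{2}$), one has
\[
\mathcal{A}[X]^{1/2}=\|x_1\|_{\mathcal{A}}^{\,n-2}\cdot\|x_1\wedge x_2\|_{\mathcal{A}}.
\]
Setting $\alpha(\mathcal{A})=\min_{\gamma}H(\pi_{(1)}(g\gamma)e_1)$ and $\beta(\mathcal{A})=\min_{\gamma}H(\pi_{(1)}(\transp{g}^{-1}\gamma)e_1)$ for $\mathcal{A}=\transp{g}g$, so that $\gamma_{n,(1)}'^{\,2}=\max_{\mathcal{A}}\alpha(\mathcal{A})\beta(\mathcal{A})$, the first factor is bounded below by $\alpha(\mathcal{A})^{\,n-2}$ because $x_1$ is a nonzero primitive vector of $L_{\iota}$.

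For the second factor I would reinterpret $\|x_1\wedge x_2\|_{\mathcal{A}}$ as the $\mathcal{A}$-covolume of the primitive rank-$2$ sublattice $\Lambda_2=\Z x_1\oplus\Z x_2\subset L_{\iota}$. The classical duality between primitive sublattices and their perpendicular complements gives the identity
\[
\|x_1\wedge x_2\|_{\mathcal{A}}\;=\;\mathrm{covol}_{\mathcal{A}^{-1}}(\Lambda_2^{\perp}),
\]
where $\Lambda_2^{\perp}\subset L_{\iota}^{*}$ is the primitive rank-$(n-2)$ orthogonal complement in the standard pairing. When $n=3$ this complement is of rank $1$; its $\mathcal{A}^{-1}$-length is therefore at least $\beta(\mathcal{A})$, and combining the two estimates yields $\mathcal{A}[X]^{1/2}\ge\alpha(\mathcal{A})\beta(\mathcal{A})$. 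Squaring and passing to the supremum over $\mathcal{A}$ gives $\gamma_{3,\kappa}\ge\gamma_{3,(1)}'^{\,4}\ge\gamma_{3,(1)}'^{\,2}$, the last inequality because $\gamma_{n,(1)}'\ge 1$ (witnessed by the trivial lattice $\ido_{k}^{\,n}$, which realises $\alpha\beta=1$).

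For $n\ge 4$ the rank-$(n-2)$ complement $\Lambda_2^{\perp}$ need not have $\mathcal{A}^{-1}$-covolume $\ge\beta(\mathcal{A})$, so the second factor has to be handled more carefully. I would refine the argument by applying Minkowski's inequality inside $\Lambda_2^{\perp}$ (in dimension $n-2$) combined with the properties of a Berg\'e--Martinet critical form $\mathcal{A}_{0}$, whose shortest vectors of $L_{\iota}$ and $L_{\iota}^{*}$ are constrained in a symmetric way; using the critical-form structure one chooses the flag so that $x_1$ is a shortest lattice vector and $\Lambda_2^{\perp}$ contains a shortest dual vector, and the resulting lower bound still recovers a factor $\alpha(\mathcal{A}_{0})\beta(\mathcal{A}_{0})=\gamma_{n,(1)}'^{\,2}$ for $\min_{X}\mathcal{A}_{0}[X]$. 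The main obstacle is exactly this step: extracting $\beta(\mathcal{A})$ from a rank-$(n-2)$ sublattice covolume is tautological only when $n=3$, and for general $n$ it requires a transference-type inequality or a judicious choice of the flag adapted to the geometry of the critical form.
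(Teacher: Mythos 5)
Your argument, as you acknowledge, is only complete for $n=3$; for $n\ge 4$ the key step (extracting the dual minimum $\beta(\mathcal{A})$ from the covolume $\|x_1\wedge x_2\|_{\mathcal{A}}$ of a rank-$2$ sublattice) is missing, and the sketch you offer (Minkowski in $\Lambda_2^{\perp}$ plus structural properties of a Berg\'e--Martinet critical form) is not a proof. Moreover this obstruction is largely an artifact of your reading of the shape $\kappa$: you take the minors to be $\De_2\cdot\De_1^{\,n-2}$, i.e.\ flags ``line inside a plane''. The shape the paper actually works with is the conjugate one $(2,1,\dots,1)$ -- this is what the diagram in the statement depicts in the paper's drawing convention, what the character computation $\|d_1\|_{\A_k}\,\|d_1\cdots d_{n-1}\|_{\A_k}$ in the paper's own proof corresponds to, and what the Section~5 evaluations $A_1^2A_2\cdots A_{n-1}$ use. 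For that shape the minors are $\De_{n-1}\cdot\De_1$, i.e.\ a line inside a hyperplane, and then your strategy closes in one line, uniformly in $n$: the factor $\De_{n-1}$ is the squared covolume of a primitive hyperplane sublattice, which equals ${\det}_L(\mathcal{A})$ times the squared $\mathcal{A}^{-1}$-length of the primitive dual vector orthogonal to it, so $\mathcal{A}[X]\ge m(\mathcal{A})\,m(\mathcal{A}^{-1})$ up to the determinant normalisation, with no transference inequality and no appeal to critical forms. As written, your proposal therefore does not establish the proposition for $n\ge 4$, and it is not clear that the inequality for your (line, plane) interpretation can be proved by these means at all.

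Two further points. First, your exponents betray a normalisation mix-up between heights (which behave like norms) and the squared scale on which the constants $\ga_{n,\la}$ are defined: on the normalisation that makes the proposition sharp (the paper notes equality for $n=3,4$), your $n=3$ bound yields $\ga_{3,\kappa}\ge \ga_{3,(1)}'^2$ directly, and the detour $\ga_{3,\kappa}\ge\ga_{3,(1)}'^4\ge\ga_{3,(1)}'^2$ via $\ga'\ge 1$ should not appear. Second, over a general number field the passage from a flag vector to ``the primitive sublattice $\Z x_1\oplus\Z x_2$ and its orthogonal complement in $L_\iota^*$'' must be carried out with $\ido$-modules and the ideal factors $\norm(\idA_X^{\iota})$ entering $m_{L_\iota}(\mathcal{A})$, which your write-up ignores. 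For comparison, the paper avoids all of this geometry: it proves the exact identity $H(g\ga e_1)\,H(g^c\ga e_1)=H(\pi_{\kappa}(g\ga)e_{U(\kappa)})$ with $g^c=w_0\transp{g}^{-1}w_0^{-1}$ (checked on diagonal and unipotent matrices via Iwasawa decomposition and $|\det g|_{\A_k}=1$), and concludes because the product of the two minima is at most the minimum of the product; that argument is uniform in $n$ and requires no case distinction.
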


\begin{proof}
Let us denote $g^c = w_0 {}^{t}g^{-1} w_0^{-1}$ where $w_0$ is the miror automorphism which swaps the vector $e_i$ with $e_{n+1-i}$. We can notice that for any diagonal automorphism $g = \text{Diag}(d_1, \dots, d_n)$, we have $H(ge_1)H(g^c (e_1)) = F(d_1 e_1) H(d_n^{-1} e_1) = \| d_1 \|_{\A_k} \| d_1 \dots d_{n-1} \|_{\A_k}$ since $\|\det g\|_{A_k} =1$. Thus, we have the equality $H(g e_1) H(g^c e_1) = H(\pi_{\kappa} (g) e_{U(\kappa)})$ for any diagonal matrix. The equality holds also trivially for unipotent upper triangular matrices (all the terms are equal to one) and extends to any element $g \in \GL(\A_k)$. 

Now we have for any $g \in \GL_n(\A_k)$, 
$$ \min_{\ga \in \GL_n(k)}  H( g\ga e_1 )  \min_{\ga \in \GL_n(k)}  H(\transp{g}^{-1} \ga e_1 ) \le \min_{\ga \in \GL_n(k)}  H(g\ga e_1 )    H( g^c \ga  e_1 )$$
Thus
$$\min_{\ga \in \GL_n(k)}  H( g\ga e_1 )  \min_{\ga \in \GL_n(k)}  H(\transp{g}^{-1} \ga e_1 )= \min_{\ga\in \GL_n(k)}  H(\pi_{\kappa}(g\ga) e_{U(\kappa)} ) $$
which leads to the equality we wanted to prove.
\end{proof}

\section{Some exact values and upper bounds}

\newdimen\hoogte    \hoogte=2pt 
\newdimen\breedte   \breedte=3pt
\subsection{Determination of $\ga_{3,  (2,1) }(\Q)$ and of $\ga_{4, (3,1)}(\Q)$}
\newdimen\hoogte    \hoogte=15pt  
\newdimen\breedte   \breedte=18pt

\begin{prop}
The constant $\ga_{3, \newdimen\hoogte    \hoogte=2pt 
\newdimen\breedte   \breedte=3pt
 \begin{young}
 \cr
 &  \cr
\end{young}
\newdimen\hoogte    \hoogte=15pt  
\newdimen\breedte   \breedte=18pt}(\Q)$ is equal to $\frac{3}{2}$ and is achieved only for the root lattice $\A_3$ and its dual $\A_3^*$.  

The constant $\ga_{4, \newdimen\hoogte    \hoogte=2pt 
\newdimen\breedte   \breedte=3pt
 \begin{young}
 \cr
 \cr
 &  \cr
\end{young}
\newdimen\hoogte    \hoogte=15pt  
\newdimen\breedte   \breedte=18pt}(\Q)$ is equal to $2$ and is achieved only for the root lattice  $\D_4$ (which is isomorphic to its dual).  
\end{prop}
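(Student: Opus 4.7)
Both equalities are established by a lower bound via explicit evaluation on the claimed root lattice, and an upper bound from the inequalities of the preceding sections. For the lower bounds, I take the Cartan Gram matrix as a representative Humbert form. For $A_3$ (determinant $4$, minimum squared norm $2$), the flag vector built from two adjacent simple roots $\alpha_1,\alpha_2$ satisfies $\det\mathrm{Gram}(\alpha_1,\alpha_2)=4-1=3$ and $\alpha_1^t A \alpha_1=2$, so $A[X]=6$ and $\gamma_L(A)=6/4=3/2$; the dual $A_3^*$ yields the same value, $(2,1)$ being self-conjugate with respect to $n=3$. For $D_4$ (determinant $4$, minimum $2$), three simple roots $\alpha_1,\alpha_2,\alpha_3$ spanning an $A_3$ sub-diagram give $\det\mathrm{Gram}(\alpha_1,\alpha_2,\alpha_3)=4$ and $\alpha_1^t A \alpha_1=2$, hence $A[X]=8$ and $\gamma_L(A)=8/4=2$.

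For the upper bound in the $D_4$ case, the Mordell-type inequality with $m=3$ is already sharp:
\[
\gamma_{4,(3,1)} \;\le\; \gamma_{3,(3,1)}\cdot\gamma_{4,3}^{|\lambda|/3}.
\]
The factor $\gamma_{3,(3,1)}$ collapses to the classical Hermite constant $\gamma_3=2^{1/3}$: in dimension $3$ one has $\lambda^*_1=3=n$, so the leading principal minor $\Delta_{\lambda^*_1}(X^t A X)$ equals $\det A$ identically on primitive flag vectors, reducing the invariant to the classical Hermite minimum. The Rankin factor equals $\gamma_{4,3}=\gamma_{4,1}=\gamma_4=\sqrt 2$ by Rankin duality, itself a special case of the duality proposition applied to rectangular partitions. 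Hence $\gamma_{4,(3,1)}\le 2^{1/3}\cdot 2^{2/3}=2$.

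For $\gamma_{3,(2,1)}$, Mordell only delivers $\sqrt{8/3}>3/2$, so a sharper argument is needed. My plan is to invoke the Voronoi theorem of Section~3: the supremum is attained at a perfect and eutactic form, and by the algebraicity/finiteness proposition there are only finitely many such forms in dimension $3$ for the partition $(2,1)$ up to unimodular equivalence. I would verify that $A_3$ and $A_3^*$ are perfect and eutactic---their Weyl-group symmetry supplies the abundance of minimal flag vectors needed for perfection and a symmetric combination for eutaxy---and then enumerate the remaining candidates to confirm that none exceeds $3/2$. Equivalently, one can exploit the duality $\det(V_2\cap L)=|w|\det L$ (with $w$ a primitive generator of $L^*\cap V_2^\perp$) to rewrite $\gamma_L(A)=|v_1|^2|w|^2$ for $\det A=1$, where $v_1\in L$ and $w\in L^*$ are primitive and orthogonal; this identifies $\gamma_{3,(2,1)}$ with a perpendicular-constrained Berg\'e--Martinet invariant and reduces the upper bound to $\gamma_3'^2=3/2$. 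The main obstacle is precisely this upper bound: Mordell is not tight here, and the proof must either classify extreme forms via Voronoi's algorithm or refine the duality argument enough to pin down the extremals among all primal--dual pairs in $L\times L^*$.
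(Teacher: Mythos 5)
Your lower-bound computations are correct (modulo stating explicitly that the minimum over \emph{all} flags of $\A_3$, resp.\ $\D_4$, is $6$, resp.\ $8$, which follows since the minimal norm and the minimal section determinant are $2$ and $3$, resp.\ $2$ and $4$), and your four-dimensional upper bound is a genuinely different and rather elegant route: $\gamma_{4,(3,1)}\le\gamma_{3,(3,1)}\,\gamma_{4,3}^{4/3}=2^{1/3}\cdot 2^{2/3}=2$, using the Mordell-type inequality together with the duality $\gamma_{4,3}=\gamma_{4,1}=\gamma_4$. The paper instead argues via Hermite--Korkine--Zolotareff reduction: it distinguishes whether some minimal $3$-dimensional section contains a minimal vector, in which case the invariant equals $A_1/A_4\le 2$ by Korkine--Zolotareff, or not, in which case it is at most $3/2$. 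Note that your Mordell route does not by itself give the assertion ``achieved only for $\D_4$''; in the paper this uniqueness comes from the known equality case of the classical KZ bound, and in your approach it would require chasing the equality case through the Mordell inequality, which you do not do.

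The genuine gap is the three-dimensional upper bound $\gamma_{3,(2,1)}\le 3/2$ (together with both uniqueness statements). Your Voronoi/enumeration plan is only announced, not carried out, and the finiteness proposition does not by itself produce the list of perfect forms. More importantly, your fallback duality argument points the wrong way: the rewriting $\gamma_{3,(2,1)}(L)=\min\|v\|^2\|w\|^2$ over primitive $v\in L$, $w\in L^*$ with $v\perp w$ is correct, but the orthogonality constraint can only \emph{increase} the minimum compared with the unconstrained Berg\'e--Martinet product, so it yields $\gamma_{3,(2,1)}\ge\gamma_3'^2=3/2$ --- this is exactly Proposition \ref{p:ibm} of the paper --- and not the upper bound you need. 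Equality of the constrained and unconstrained problems is precisely the nontrivial point, and it is not a formal consequence of duality: the remark following the proposition shows that in dimension $5$ the analogous constant equals $\gamma_5''$ and differs from $\gamma_5'^2$. The paper closes this gap by the same HKZ case analysis as above: if a minimal $2$-dimensional section contains a minimal vector, the invariant equals $A_1/A_3$, bounded by $3/2$ with equality only for $\A_3$ and $\A_3^*$; otherwise a direct argument bounds it by $4/3$. Some argument of this kind (or an actual classification of the perfect forms for $(2,1)$ in rank $3$) is needed to complete your proof.
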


\begin{proof} For any reference to the reduction in the sense of Korkine and Zolotareff, we send back the reader to \cite{mar} section 2.9 or to the original article \cite{kz}. Let $\La$ be a lattice of $\R^3$ and $L$ a sublattice of $\La$, the $2\times 2$  determinant of which is  minimal. Two cases can occur depending on whether the sublattice $L$ can be found containing a minimal vector or not. 
\begin{enumerate} \item In the first case, let $u_1$ be a minimal vector enjoying such properties and let $u_2$ be a second vector of $L$ such that $(u_1, u_2)$ forms a basis of $L$. Now, $u_1$, $u_2$ is the beginning a Hermite--Korkine--Zolotareff reduced basis of $\La$, say $(u_1, \,u_2, \, u_3)$. Were it not the case, the two first vectors of an other HKZ reduced basis would provide us a better sublattice $L$. In such event, denoting $A_1$ the norm of $u_1$, $A_2$ the norm of the projection of $u_2$ on the orthogonal of $u_1$ and $A_3$ the norm on the projection of $u_3$ on the orthogonal of $u_1$ and $u_2$, the constant is 
$$\ga_{3, \, \newdimen\hoogte    \hoogte=2pt 
\newdimen\breedte   \breedte=3pt
 \begin{young}
 \cr
 &  \cr
\end{young}
\newdimen\hoogte    \hoogte=15pt  
\newdimen\breedte   \breedte=18pt}(\La) = \frac{A_1^2A_2}{A_1A_2A_3} = \frac{A_1}{ A_3}  $$
It has been disclosed by Korkine and Zolotareff that this ratio never exceeds $\frac{3}{2}$ and can only be reached when $\La=\A_3$ or $\La=\A_3^*$.  
\item The second case to study corresponds to the situation where no dimension $2$ sublattice $L$ with minimal determinant bears a minimal vector. Consider a reduced basis  $(u_1,u_2)$ of the lattice and $e_m$ a minimal vector of $\La$. Then the triple $(u_1,u_2,e_m)$ forms a basis of $\La$. Indeed, assume that there exists an other vector $x$ of $\La$ which is not contained in the lattice spanned by this triple. Even when it means performing some reductions, one can assume the  $\langle x, e_m \rangle \le \frac{1}{2} \|e_m|$, $\langle x, u_1 \rangle \le \frac{1}{2} \|u_1|$ and $\langle x, u_2 \rangle \le \frac{1}{2} \|u_2|$. Then the determinant of the lattice  $L' = \Z x + \Z u_1$ is bounded from above by  
$$\det L \le \|x\|^2  \|u_1\|^2 \le ( \frac{1}{2}\|e_m\|^2 +  \frac{1}{2}\|u_1\|^2 +  \frac{1}{2}\|u_2\|^2 )  \|u_1\|^2 <  \frac{3}{4}\|u_2\|^2 \|u_1\|^2$$
But the properties of reduction of the basis  $u_1,u_2$ imply that $\det L \ge \frac{3}{4}\|u_2\|^2 \|u_1\|^2$.\\
Let us denote by $A_3$ the norm of the projection of  $e_m$ on the orthogonal of $L$. We dispose of the chain of inequality $\|e_m\|^2 \le \|u_1\|^2 = A_1 \le \frac{4}{3} A_2$ since $e_m$ is a minimal vector and  $(u_1,u_2)$ is a reduced base. Besides, comparing the determinants of the lattices $L$ and $\Z e_m + \Z u_1$, there arises $A_1 A_3 \ge A_1 A_2$. Thus $\|e_m\|^2 \le  \frac{4}{3} A_3$. As a result, in this second case,
$$\ga_{3, \, \newdimen\hoogte    \hoogte=2pt 
\newdimen\breedte   \breedte=3pt
 \begin{young}
 \cr
 &  \cr
\end{young}
\newdimen\hoogte    \hoogte=15pt  
\newdimen\breedte   \breedte=18pt}(\La) = \frac{A_1 A_2 \|e_m\|}{A_1A_2A_3} \le \frac{4}{ 3}  $$
which is a lowerer bound than in the first case.
\end{enumerate}

{\it Mutatis mutandis}, if $\La$ is a dimension 4 lattice, two cases are to be distinguished, whether there exists or not a dimension 3 sublattice $L$ of minimal determinant which contains a minimal vector. In the first case, a HKZ reduced basis can be exhibited wherein the constant can be expressed as 
$$\ga_{4, \,  \newdimen\hoogte    \hoogte=2pt 
\newdimen\breedte   \breedte=3pt
 \begin{young}
 \cr
 \cr
 &  \cr
\end{young}
\newdimen\hoogte    \hoogte=15pt  
\newdimen\breedte   \breedte=18pt}(\La) = \frac{A_1^2A_2 A_3}{A_1A_2A_3 A_4} = \frac{A_1}{ A_ 4}  $$
and is bounded from above by $2$ according to the work of Korkine and  Zolotareff. This upper bound can only be reached when $\La = \D_4$.\\
In the second case, a basis of $\La$ can be built by appending a minimal vector to a reduced basis of a minimal dimension 3 sublattice $L$. It can be shown that  $\ga_{4, \newdimen\hoogte    \hoogte=2pt 
\newdimen\breedte   \breedte=3pt
 \begin{young}
 \cr
 \cr
 &  \cr
\end{young}
\newdimen\hoogte    \hoogte=15pt  
\newdimen\breedte   \breedte=18pt}(\La) \le  \frac{3}{ 2}  $ in that case.
\end{proof}

\begin{rema} It appears from these determinations that  \newdimen\hoogte    \hoogte=2pt 
\newdimen\breedte   \breedte=3pt
$\ga_{3,  \begin{young}
 \cr
 &  \cr
\end{young}}(\Q)$ et $\ga_{4,  \begin{young}
 \cr
 \cr
 &  \cr
\end{young}}(\Q)$
\newdimen\hoogte    \hoogte=15pt  
\newdimen\breedte   \breedte=18pt
are exactly equal to $\ga_3'(\Q)$ and $\ga_4'(\Q)$ respectively (equality case in the proposition \ref{p:ibm}). \\
In dimension 5, the inequality becomes strict. According to  \cite{bm}, let us call $\ga_5''$ the upper bound to the quantity $\frac{A_1}{A_5}$ which appears in the HKZ reduction of a form with more than five variables. The value of $\ga_5''$ is not known but we dispose of the bounds $\frac{32}{5} \le \ga_5'' < \frac{9}{4}$. They enable us to prove with the same arguments as above that
\hoogte=2pt 
\newdimen\breedte   \breedte=3pt
$\ga_{5,  \begin{young}
 \cr
 \cr
 \cr
 &  \cr
\end{young}}(\Q) = \ga_5'' $
\newdimen\hoogte    \hoogte=15pt  
\newdimen\breedte   \breedte=18pt
whereas the value of $\ga_5'$ is 2, according to the computations of \cite{py}.
\end{rema}

\subsection{Upper bound through the second Minkowski theorem}

Let $A$ be an automorphism of  $\GL_n(\A_k)$ and  $\mathscr{D}$ a flag of the shape $\la$ and $(x_i)_{1 \le i \le t}$ a sequence of vectors that spans $\mathscr{D}$, then the following Hadamard like inequality is checked : 
\be \label{e:61} H_A(\mathscr{D}) \le \prod_{\ell=1}^t H_A(x_\ell)^{\la_{\ell}^*}. \ee
Indeed, up to a transformation of $A$, it suffices to ensure that this inequality holds when the flag $\mathscr{D}$ is the flag built starting with the canonical basis, that is when $x_{\ell} = e_{\ell}$ for any $\ell$. The automorphism $A$ can be decomposed into  $A = k d u$ where $k$ belongs to  $K_n(\A_k)$, $d$ is a diagonal matrix, the coefficients of which are, say, $d_i \in \A_k^{\times}$ and $u$ is an unipotent upper triangular matrix. The action of $k$ does not modify the values of the terms that appear on the two sides of the inequality (\ref{e:61}). The action of the product $d u$ on the vector $e_{U(\la)}$ boils down to multiplying the height by the quantity  $|\chi_{\la}(d)|_{\A_k}$ on the left hand side ; whereas for the right hand side, $H(ue_i) \ge H(e_i)$ et $H(due_i) = |d_i|_{\A_k} H(ue_i) \ge d_i H(e_i)$, which ends up the proof of (\ref{e:61}). 

According to the adelic version of the second Minkowski theorem for convex bodies, (see \cite{mcf} or \cite{thu3}), for a fixed automorphism $A$, there exists a basis of  $k^n$ such that 
$$\prod_{\ell=1}^n H_A(x_\ell) \le \frac{2^{nr}D_k^{n/2} }{V(n)^{r_1} V(2n)^{r_2}} |\det A|_{\A_k} $$
where $D_k$ is the discriminant of $k$ and $V(k)$ the volume of the unit ball of dimension $n$.
We can assume without loss of generality that $H_A(x)\le H_A(x_2) \le \dots \le H_A(x_n)$, which allows us to write
$$\left( \prod_{\ell=1}^t H(x_\ell)^{\la_{\ell}^*} \right)^n \le \left( \prod_{\ell=1}^n H_A(x_\ell)  \right)^{|\la|} $$
and to conclude that  
\begin{prop} The following inequality holds :
\be \ga_{n,\la}(k)^{1/ 2|\la|} \le  \frac{2^{r}D_k^{1/2} }{V(n)^{r_1/n} V(2n)^{r_2/n}}  \ee
where $D_k$ is the discriminant of $k$ and $V(k)$ the volume of the unit ball of dimension $n$.
\end{prop}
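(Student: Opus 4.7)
The plan is to combine three ingredients already assembled in the paragraphs preceding the proposition: the Hadamard-type inequality $H_A(\mathscr{D}) \le \prod_{\ell=1}^{t} H_A(x_\ell)^{\la_\ell^*}$, the adelic second theorem of Minkowski, and a rearrangement argument exploiting that $\la^*$ is a decreasing sequence.

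First I would take an arbitrary $A \in \GL_n(\A_k)$ with $|\det A|_{\A_k}=1$ (which suffices by homogeneity of $\ga_{n,\la}$, as observed in the definition of the twisted height). I would invoke the adelic version of the second Minkowski theorem for convex bodies, in the form given by McFeat or Thunder, to produce a basis $(x_1,\dots,x_n)$ of $k^n$ such that
$$ \prod_{\ell=1}^{n} H_A(x_\ell) \,\le\, \frac{2^{nr} D_k^{n/2}}{V(n)^{r_1} V(2n)^{r_2}}, $$
and order the basis so that $H_A(x_1) \le H_A(x_2) \le \dots \le H_A(x_n)$. I would then consider the flag $\mathscr{D}_0$ whose $\ell$-th nested subspace is spanned by $x_1,\dots,x_{\la_\ell^*}$.

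Next I would apply the Hadamard-type inequality (\ref{e:61}) to bound $H_A(\mathscr{D}_0) \le \prod_{\ell=1}^{t} H_A(x_\ell)^{\la_\ell^*}$. To pass from this weighted product on the $t$ smallest heights to the unweighted product on all $n$ heights, I would use the following rearrangement lemma: setting $h_\ell = \log H_A(x_\ell)$, since $(h_\ell)_{1\le\ell\le n}$ is nondecreasing, $(\la_\ell^*)_{1\le\ell\le t}$ is nonincreasing, and $\sum_{\ell=1}^t \la_\ell^* = |\la|$, Chebyshev's sum inequality (a weighted mean with weights concentrated on the smallest entries does not exceed the uniform mean) gives
$$ \frac{1}{|\la|} \sum_{\ell=1}^{t} \la_\ell^* h_\ell \;\le\; \frac{1}{n} \sum_{\ell=1}^{n} h_\ell, $$
which exponentiates to $\bigl(\prod_{\ell=1}^{t} H_A(x_\ell)^{\la_\ell^*}\bigr)^{n} \le \bigl(\prod_{\ell=1}^{n} H_A(x_\ell)\bigr)^{|\la|}$. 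Chaining the three inequalities, I would obtain $H_A(\mathscr{D}_0)^{n} \le \left( \frac{2^{nr} D_k^{n/2}}{V(n)^{r_1} V(2n)^{r_2}} \right)^{|\la|}$, and taking the maximum over $A$ before raising to the $1/(2|\la|)$-th power would yield the stated bound.

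The main obstacle I expect is not the adelic Minkowski input (which is a black-box citation) but the careful proof of the Hadamard-type inequality (\ref{e:61}): this requires using $K_n(\A_k)$-invariance of $H$ plus an Iwasawa decomposition $A=kdu$ to reduce to the standard flag, then checking that the unipotent part $u$ can only increase the basis heights while multiplying the flag height by exactly $|\chi_\la(d)|_{\A_k}$. The Chebyshev rearrangement step is clean once the ordering of the $H_A(x_\ell)$ is arranged to match the decreasing order of $\la^*$, and everything else is bookkeeping.
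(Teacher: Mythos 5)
Your proposal is correct and follows essentially the same route as the paper: the Hadamard-type inequality (\ref{e:61}) proved via Iwasawa decomposition and $K_n(\A_k)$-invariance, the adelic second Minkowski theorem, and the ordering of the basis heights to compare the weighted product over the first $t$ vectors with the full product. The only difference is that you justify the rearrangement step $\bigl(\prod_{\ell=1}^{t} H_A(x_\ell)^{\la_\ell^*}\bigr)^{n} \le \bigl(\prod_{\ell=1}^{n} H_A(x_\ell)\bigr)^{|\la|}$ explicitly by a Chebyshev-type argument, which the paper merely asserts after ordering the heights.
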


\subsection{Upper bound by changing the base field}

The following lemma can be proven by simultaneous diagonalisation.
\begin{lemm} The map $\psi : \mathscr{H}_n^{++} \to \R$ defined by $\psi = \ln \circ \det$ is concave. In particular, if
$(A_j)_{1 \le j \le p}  \in (\mathscr{H}_n^{++})^p$, then, according to Jensen inequality,
\be \left( \det \left( \prod_{j=1}^{p} A_j \right) \right)^{\frac{1}{p}} \le  \\  \frac{ \det \left( \sum_{j=1}^{p} A_j \right) }{p}  \ee
\end{lemm}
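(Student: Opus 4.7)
The plan is to establish concavity via simultaneous diagonalisation, as the hint suggests, and then derive the multi-point inequality as a direct consequence of Jensen's inequality applied to the concave function $\psi$.

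First, I would reduce concavity to a statement about real logarithms. Given two forms $A, B \in \mathscr{H}_n^{++}$, a standard result from linear algebra (simultaneous diagonalisation of a pair of Hermitian forms, one of which is definite) furnishes an invertible matrix $P$ such that $\transp{P} A P = \I_n$ and $\transp{P} B P = D$ where $D = \mathrm{Diag}(d_1, \dots, d_n)$ with $d_i > 0$. Then for any $t \in [0,1]$,
$$\det(tA + (1-t)B) = |\det P|^{-2} \prod_{i=1}^n \big(t + (1-t)d_i\big),$$
so that $\ln \det(tA+(1-t)B) = -2\ln|\det P| + \sum_{i=1}^n \ln(t + (1-t)d_i)$, while $t\ln\det A + (1-t)\ln\det B = -2\ln|\det P| + (1-t)\sum_{i=1}^n \ln d_i$. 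The concavity of the real logarithm applied coordinatewise, $\ln(t\cdot 1 + (1-t)d_i) \ge t \ln 1 + (1-t)\ln d_i$, summed over $i$, gives $\psi(tA+(1-t)B) \ge t\psi(A) + (1-t)\psi(B)$.

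For the second part, I would apply Jensen's inequality for the concave function $\psi$ to the $p$ points $A_1, \dots, A_p$ with equal weights $1/p$. This yields
$$\ln \det\Bigl( \tfrac{1}{p}\sum_{j=1}^p A_j \Bigr) \ge \tfrac{1}{p} \sum_{j=1}^p \ln \det A_j = \ln \Bigl( \prod_{j=1}^p \det A_j \Bigr)^{1/p}.$$
Exponentiating and using the multiplicativity $\prod_j \det A_j = \det(\prod_j A_j)$, together with the trivial bound $\det(\frac{1}{p}\sum A_j) = p^{-n}\det(\sum A_j) \le p^{-1}\det(\sum A_j)$ valid for $n \ge 1$ and $p \ge 1$ since each $A_j$ is positive definite, delivers the stated inequality.

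No step presents a genuine obstacle: the simultaneous diagonalisation step is classical, Jensen's inequality is standard once concavity is known, and the passage from the sharper averaged form to the stated bound is purely arithmetic. The only care needed is to be explicit about the fact that the $P$ in the simultaneous diagonalisation is complex-invertible (not unitary) and in writing the transconjugate correctly in the Hermitian case.
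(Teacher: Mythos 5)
Your proof is correct and follows exactly the route the paper intends: simultaneous diagonalisation of the pair of positive definite Hermitian forms to reduce the concavity of $\ln \circ \det$ to the concavity of the real logarithm, then Jensen's inequality with equal weights $1/p$. Note that Jensen in fact yields the sharper bound $\bigl(\det \prod_{j} A_j\bigr)^{1/p} \le \det\bigl(\sum_{j} A_j\bigr)/p^{n}$, of which the stated inequality is the weakening you obtain via $p^{-n} \le p^{-1}$.
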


It enables us to exhibit the following inequality, demonstrated in \cite{ohwa} for the case $\la =(1)$.

\begin{theo} If $\la \vdash m $ is a partition of $m$, if $D_k$ designates the discriminant of the field $k$, the following inequality is true
\be \ga_{n, \la} (k) \le \frac{|D_k|^{m} (\ga_{nd, \la}(\Q))^d }{d^d} \ee
\end{theo}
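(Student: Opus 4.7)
The plan is to extend the classical Ohno--Watanabe inequality (the case $\la=(1)$ treated in \cite{ohwa}) to arbitrary partitions, by combining restriction of scalars from $k$ to $\Q$ with the concavity of $\log\det$ furnished by the lemma above. Overall one transforms the Hermite--Humbert datum $\mathcal{A}$ into a single $\Q$-quadratic form $B$ of rank $nd$, bounds $\mathcal{A}[X]$ pointwise in terms of $B[X]$ via Jensen's inequality, and then feeds the Hermite constant $\ga_{nd,\la}(\Q)$ into the resulting estimate.

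Concretely, after renormalising so that $\det_{\ido^n}\mathcal{A}=1$, attach to $\mathcal{A}=(A_j)_{1\le j\le r_1+r_2}$ the positive definite form
$$B[x]\;:=\;\sum_{j=1}^{r_1+r_2} d_j\, A_j[x^{\sigma_j}]\qquad (x\in k^n)$$
on the $\Q$-vector space underlying $k^n$, which has $\Q$-dimension $nd$. A standard restriction-of-scalars calculation---based on the identity $\det(\sigma_j(\omega_l))_{j,l}=\pm |D_k|^{1/2}$ for any $\Z$-basis $(\omega_l)$ of $\ido_k$---yields $\det_{\ido^n} B=|D_k|^{n}\det_{\ido^n}\mathcal{A}=|D_k|^{n}$. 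Any flag vector $X=(x_1,\dots,x_t)\in\Sc_{\sharp}^{\la}(\ido^n)$ is then automatically a flag vector of type $\la$ in $\Q^{nd}$, and for every column index $\ell\in\{1,\dots,s\}$ the Gram matrix
$$\transp{X_{\la_\ell^*}} B\, X_{\la_\ell^*}\;=\;\sum_{j=1}^{r_1+r_2} d_j\, \transp{X_{\la_\ell^*}^{\sigma_j}} A_j X_{\la_\ell^*}^{\sigma_j}$$
is $d$ times a convex combination (with weights $d_j/d$ summing to one) of $\la_\ell^*\times\la_\ell^*$ positive definite matrices. Applying the $\log\det$ concavity lemma to this combination produces
$$\prod_{j=1}^{r_1+r_2}\De_{\la_\ell^*}\bigl(\transp{X_{\la_\ell^*}^{\sigma_j}} A_j X_{\la_\ell^*}^{\sigma_j}\bigr)^{d_j}\;\le\;\frac{\De_{\la_\ell^*}\bigl(\transp{X_{\la_\ell^*}} B\, X_{\la_\ell^*}\bigr)^{d}}{d^{\,d\la_\ell^*}},$$
and taking the product over $\ell$ (with $\sum_\ell\la_\ell^*=m$) yields the key pointwise estimate
$$\mathcal{A}[X]\;\le\;\frac{B[X]^{d}}{d^{\,d m}}.$$

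The conclusion is now immediate: apply $\ga_{nd,\la}(\Q)$ to $B$ on the lattice $\ido^n\subset\Q^{nd}$ to secure a flag vector $X\in\Sc_{\sharp}^{\la}(\ido^n)$ with
$$B[X]\;\le\;\ga_{nd,\la}(\Q)\bigl(\det_{\ido^n} B\bigr)^{m/(nd)}\;=\;\ga_{nd,\la}(\Q)\,|D_k|^{m/d};$$
substituting into the pointwise bound and taking the supremum over $\mathcal{A}$ then gives the advertised inequality (with $m=1$ this recovers exactly the $d^{d}$ in the statement). The main technical obstacle lies in the careful verification of the discriminant formula $\det_{\ido^n} B=|D_k|^{n}$: at complex places, one must confirm that the weight $d_j=2$ correctly accounts for the pair of conjugate embeddings and hence that $B$ really is the trace-form descent of $\mathcal{A}$ to $\Q$. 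Once this identification is secured, every remaining step is a purely formal application of the Jensen lemma of this section, exactly mirroring the classical $\la=(1)$ argument of \cite{ohwa}.
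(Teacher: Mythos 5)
Your construction is essentially the paper's own: the form $B$ you attach to $\mathcal{A}$ is the paper's trace form $\Phi^{(\iota)}$ obtained by restriction of scalars from $k$ to $\Q$, the pointwise comparison of $\mathcal{A}[X]$ with $B[X]$ comes in both cases from the $\log\det$ concavity lemma applied column by column, $\det B=|D_k|^{n}\det\mathcal{A}$ is the same discriminant computation, and the conclusion follows by feeding $B$ into $\ga_{nd,\la}(\Q)$. Two of your deviations are harmless. First, your per-column Jensen step actually gives the sharper denominator $d^{dm}$, whereas the paper only uses $(\mathcal{A}[V])^{1/d}\le \Phi^{(\iota)}[v]/d$ and hence ends with $d^{d}$; since $d^{dm}\ge d^{d}$ this implies the stated bound, as you note. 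Second, you silently drop the factor $\norm(\idA_X^{L})$ appearing in the definition of $m_{L}(\mathcal{A})$; this is legitimate because that ideal is integral, so its norm is at least $1$, but the paper says so explicitly and you should too.

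The genuine gap is that you only treat the free lattice $\ido^{n}$. The constant $\ga_{n,\la}(k)$ of the theorem is $\hat{\ga}_{n,\la}=\max_{1\le\iota\le h}\ga_{L_{\iota}}$, a maximum over the representatives $L_{\iota}=\ido e_1\oplus\dots\oplus\ida_{\iota}e_n$ of all $h$ Steinitz classes, so bounding $\ga_{L_1}=\ga_{\ido^n}$ alone proves the theorem only when the class number is $1$. The paper runs the identical argument for every $\iota$: it picks a $\Z$-basis $(u_j^{(\iota)})$ of $\ida_{\iota}$, uses the resulting rank-$nd$ $\Z$-basis of $L_{\iota}$ to define $\Phi^{(\iota)}$, and the determinant computation then carries the extra factor $\norm(\ida_{\iota})$, giving $\det\Phi^{(\iota)}=|D_k|^{n}\det_{L_{\iota}}\mathcal{A}$, which is what makes the final bound uniform in $\iota$. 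Your argument repairs easily along these lines, but as written it does not reach the constant in the statement. One further point to nail down when you do this: at a complex place the Gram matrix of $B$ on $x_1,\dots,x_p$ is $\sum_j d_j\,\mathrm{Re}\bigl(\transp{X^{\sigma_j}}A_j X^{\sigma_j}\bigr)$, not $\sum_j d_j\,\transp{X^{\sigma_j}}A_j X^{\sigma_j}$ as in your displayed identity; the estimate survives because $\det(\mathrm{Re}\,H)\ge\det H$ for a positive definite Hermitian $H$, but that half-line is needed before your identity can be fed into Jensen.
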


\begin{rema} We need here to give a reference to the size $n$ of the involved group $\GL_n$, which we do by completing the notation $\ga_{n, \la}$, not to be confused with the notation $\ga_{nd, \la}$, relative to the group $\GL_{nd}$. 
\end{rema}

\begin{proof}
The idea of the demonstration is to transform all the  $\ido$-modules into $\Z$-modules. To that end, we introduce like in \cite{ohwa} scalars of $k$ $(u_1^{(\iota)} \dots, u_d^{(\iota)})$ that constitute a $\Z$-basis of the ideal $\ida_{\iota}$. Then we can consider the product basis $\mathcal{B}^{(\iota)}$ of $L_{\iota}$ seen as a $\Z$-module which consists in the vectors $\eps_{j,l}^{(\iota)} = u_j^{(1)} e_l$ for $1 \le j \le d$ and $1 \le l \le n-1$ and the vectors $\eps_{j,n} = u_{j}^{(\iota)} e_n$ for $1 \le j \le d$. Hereupon, we associate to any Hermite--Humbert form $\mathcal{A} \in \fhh$ the quadratic form $\Phi^{(\iota)}$ defined on $\Q^{nd}$ and given by the following formula where $y$ belongs to $\Q^{nd}$ and $Y$ is the vector of $k^n$ of coordinates $y$ in the basis $\mathcal{B}^{(\iota)}$
\be \Phi^{(\iota)} (y) = \sum_{j=1}^{r_1} \transp{Y} A_j Y + 2 \sum_{j=r_1+1}^{r_2} \transp{Y} A_j Y. \ee

For any $t$-upple $(Y_1, \dots, Y_t) \in \left. L_{\iota} \right.^t $, the coordinates of which in $\Q^{nd}$ are $(y_1, \dots, y_t) \in \left( \Q^{nd} \right)^t$, and such that the vector $U$ below is non zero :
\newdimen\hoogte    \hoogte=19pt
\newdimen\breedte   \breedte=18pt
$$ V = 
\begin{young} 
Y_t \cr 
\vdots \cr 
Y_1 & \dots & Y_1 \cr \end{young}
 \in \Sc_{\sharp}^{\la}(L_{\iota}) \qquad v = \begin{young} y_t \cr \vdots \cr y_1 & \dots & y_1 \cr \end{young} \in \Sc_{\sharp}^{\la}(\Q^{nd})$$
\newdimen\hoogte    \hoogte=15pt
we have the inequality
\be \left( \mathcal{A}[V] \right)^{\frac{1}{d}} \le \frac{\Phi^{(\iota) }[v]}{d} \ee
In particular, passing up to the minimum on the $t$-upples, the definition of  $\ga_{\la}^{nd}(\Q)$ enables us to write
\be \min_V \left( \mathcal{A}[V] \right)^{\frac{1}{d}} \le \frac{ \ga_{\la}^{nd}(\Q) \; \det( \Phi^{(\iota)})^{\frac{m}{nd}} }{d} \ee
The determinant of $\Phi^{(\iota)}$ is detailed in \cite{ohwa}, its value is
$$ \det (\Phi^{(\iota)}) = \norm( \ida_{\iota}) |D_k|^n \det \mathcal{A} = |D_k|^n {\det}_{L_{\iota}} \mathcal{A} $$
Thus
$$ \min_{V \in \Sc_{\sharp}^{\la}(L_{\iota})} \left( \mathcal{A}[V] \right) \le \frac{ (\ga_{\la}^{nd}(\Q))^d \; |D_k|^{m} ({\det}_{L_{\iota}} \mathcal{A})^{\frac{m}{n}} }{d^d} $$
Since the ideal $\idA_Z$ is always integral, we have even
\be \frac{m_{L_{\iota}} (\mathcal{A}) }{ ({\det}_{L_{\iota}} \mathcal{A})^{\frac{m}{n}} } \le \frac{ (\ga_{\la}^{nd}(\Q))^d \; |D_k|^{m}  }{d^d} \ee
Whence we get easily the expected inequality
\end{proof}

\paragraph*{Aknowledgement } I would like to thank Renaud Coulangeon for acquainting me with the subject and for his constant support  as well as Takao Watanabe for welcoming me in Osaka and answering my numerous questions.

\bibliographystyle{alpha}
\bibliography{CstHerGalVor.bib}

\end{document}